\theoremstyle{definition}
\newtheorem{definition}{Definition}[section]
\theoremstyle{theorem}
\newtheorem{theorem}[definition]{Theorem}
\newtheorem{corollary}[definition]{Corollary}
\newtheorem{lemma}[definition]{Lemma}
\newtheorem{conjecture}{Conjecture}
\newtheorem{remark}[definition]{Remark}
\theoremstyle{definition}
\newtheorem{problem}[conjecture]{Problem}
\newcommand{\K}{\mathbb{K}}
\newcommand{\TTT}{\operatorname{TTT}}
\newcommand{\TTTsc}{\operatorname{TTT}_{\text{sc}}}
\newcommand{\col}{\operatorname{Col}}
\newcommand{\superp}{\operatorname{Sup}}
\newcommand{\klein}{\mathbb{Z}_2 \times \mathbb{Z}_2}
\let\phi\varphi
\def\phisum{\phi_{*}}
\titleformat*{\paragraph}{\itshape}
\begin{document}


\title{Strictly critical snarks with girth\\ or cyclic connectivity equal to 6}

\author{
Ján Mazák, Jozef Rajník, Martin Škoviera
\\[3mm]
\\{\tt \{mazak, rajnik, skoviera\}@dcs.fmph.uniba.sk}
\\[5mm]
Comenius University, Mlynská dolina, 842 48 Bratislava\\
}

\maketitle

\begin{abstract}
A snark -- connected cubic graph with chromatic index $4$ -- is critical if the graph resulting from the removal of any pair of distinct adjacent vertices is $3$-edge-colourable; it is bicritical if the same is true for any pair of distinct vertices. A snark is strictly critical if it is critical but not bicritical. Very little is known about strictly critical snarks. Computational evidence suggests that strictly critical snarks constitute a tiny minority of all critical snarks. Strictly critical snarks of order $n$ exist if and only if $n$ is even and at least 32, and for each such order there is at least one strictly critical snark with cyclic connectivity $4$. A sparse infinite family of cyclically $5$-connected strictly critical snarks is also known, but those with cyclic connectivity greater than $5$ have not been discovered so far. In this paper we fill the gap by constructing cyclically $6$-connected strictly critical snarks of each even order $n\ge 342$. In addition, we construct cyclically $5$-connected strictly critical snarks of girth 6 for every even $n\ge 66$ with $n\equiv 2\pmod8$.
\end{abstract}





\bigskip

\section{Introduction}
Cubic graphs that do not admit any proper edge colouring with
three colours, known as \emph{snarks}, play a crucial role in
the study of a variety of problems related to
flows, edge colourings, perfect matchings, or cycle covers of
graphs. 
Over time, considerable effort has been exerted to
understand the structure of snarks. Although several
relevant partial results have been gathered, many deep questions
remain open.


In our recent paper, \emph{Morphology of small snarks}
\cite{Morphology}, we have analysed the structure of all snarks
with at most 36 vertices.
The cornerstone of our approach to structural analysis
is the concept of
an \emph{{\rm I}-extension}.
This operation, also known as an \emph{edge extension}, consists in specifying
two edges in a cubic graph, subdividing each of them with a new vertex, and
adding a new edge joining the two vertices. It is a very
natural operation and in the literature it has been used many
times, see for example \cite{AFJ, BGHM, McCuaig, Wormald}.

Edge extensions can be conveniently employed to construct new
snarks from known ones: it is enough to take an existing
snark, choose a \emph{removable} pair of edges (that is, one that can be removed without
breaking uncolourability), and perform an I-extension. Removable
pairs of edges are present in most known snarks; actually,
an overwhelming majority of known snarks can be obtained from a
smaller snark by a series of I-extensions retaining
uncolourability at each step. However, there exist snarks that
cannot be obtained from a smaller snark by an I-extension ---
and these are exactly the critical snarks. A \emph{critical} snark
can be equivalently defined as one in which the removal of any
two adjacent vertices produces a $3$-edge-colourable graph. If
the removal of any pair of vertices  yields a 
$3$-edge-colourable graph, a snark is called \emph{bicritical}.
A snark which is critical but not bicritical is called
\emph{strictly critical}.

Critical snarks have been emerging in
the literature under different disguises and in different
settings \cite{dSPL, dVNR, Nedela, S-full}. In 1996, Nedela and Škoviera \cite{Nedela} introduced
critical and bicritical snarks in the context of snark
reductions and decompositions. They showed that critical snarks
coincide with $6$-irreducible snarks and that bicritical snarks
are $7$-irreducible, which means that they are the same as
the irreducible ones. In 2008, da Silva et al.
\cite{dSL, dSPL} initiated the study of flow criticality of graphs and introduced flow-edge-critical and
flow-vertex-critical snarks.
It turned out, however, that these two approaches to snark criticality agree (Máčajová and
Škoviera \cite{Macajova21}): a snark is $4$-flow-edge-critical
if and only if it is critical, and is $4$-flow-vertex-critical
if and only if it is bicritical.


Critical snarks are known to be cyclically $4$-connected with
girth at least $5$ \cite{Nedela} and therefore they are nontrivial snarks by the usual standards. As discussed in
\cite{Chladny-Skoviera-Factorisations}, every snark $G$ with
cyclic connectivity $4$ can be constructed as a dot product $G_1\cdot G_2$ of
two smaller snarks. If $G$ is bicritical, both $G_1$ and $G_2$ are bicritical. As a consequence, every bicritical snark can be decomposed
into a collection of cyclically $5$-connected bicritical
snarks such that it can be reconstructed from them by repeated
dot products. Moreover, this collection is unique up to isomorphism \cite[Theorem~C]{Chladny-Skoviera-Factorisations}.
The class of bicritical snarks is thus closed under such decomposition.
If $G$ is strictly critical, then $G_2$ must be critical but
$G_1$ need not; it only has to be ``nearly critical''  \cite[Sections~4 and~6]{Chladny-Skoviera-Factorisations}. According to \cite[Section~12]{Chladny-Skoviera-Factorisations}, snarks arising from the decomposition of strictly critical snarks might even include snarks with cyclic connectivity 3. Overall, very little is understood about the relationship between strictly critical snarks and ``nearly critical'' snarks, or about the nature of graphs in either of these two classes.

Strictly critical snarks have been previously studied in \cite{Chladny-Skoviera-Factorisations,Grunewald,Morphology, SteffenStrictlyCritical}.
The smallest strictly critical snarks have order $32$ (one is displayed in Figure~\ref{fig:scrit-32}) 
and such snarks have been constructed for all higher orders.
\begin{theorem}[\cite{Chladny-Skoviera-Factorisations}]
There exists a strictly critical snark of order $n$ for each even $n\ge 32$.
\label{thm:conn4}
\end{theorem}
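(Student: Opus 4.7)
The plan is to combine a small number of \emph{seed} strictly critical snarks, one for each residue class of even orders modulo $10$, with an iterative dot-product construction that enlarges each seed by ten vertices while preserving strict criticality. Since the Petersen graph has ten vertices, five seeds (of orders $32$, $34$, $36$, $38$, $40$) suffice to cover every even $n\ge 32$. For the seeds of orders $34$--$40$ I would either exhibit them directly by small local modifications of the order-$32$ example (Figure~\ref{fig:scrit-32}) or verify their existence computationally; for present purposes they may be regarded as a finite base case.

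The main work is the extension step. Let $H$ be a strictly critical snark and fix a witnessing pair of non-adjacent vertices $\{u,v\}$ with $H-\{u,v\}$ uncolourable. Pick two independent edges $e_1,e_2$ of $H$ avoiding $u$ and $v$, and two independent edges $f_1,f_2$ of the Petersen graph $P$. Form the dot product $G = H\cdot P$ by removing $e_1,e_2,f_1,f_2$ and reconnecting the resulting eight half-edges across the new $4$-edge-cut in the standard way. Then $|G| = |H|+10$, and $G$ is a snark because a dot product of two snarks is always a snark.

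I would next verify that $G$ remains strictly critical, which splits into checking criticality and non-bicriticality. For criticality, the standard dot-product analysis, using the criticality of $H$ and the bicriticality of $P$, shows that every adjacent pair of vertices in $G$ --- whether lying on the $H$-side, the $P$-side, or incident to a cut edge --- admits a 3-edge-colourable complement, provided $e_1,e_2,f_1,f_2$ are chosen generically. For non-bicriticality, $\{u,v\}$ is still a failing pair: any hypothetical proper 3-edge-colouring of $G-\{u,v\}$ restricts on the $P$-side to a colouring of $P-\{f_1,f_2\}$ whose colour pattern on the four cut half-edges is forced, by uncolourability of $P$ together with the Parity Lemma, into a restricted list of admissible types. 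Each admissible type pulls back to a proper 3-edge-colouring of $H-\{u,v\}$ by reinstating $e_1,e_2$ with the matching colours, contradicting the uncolourability of $H-\{u,v\}$.

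Iterating the extension $k$ times on a seed $H_{n_0}$ of order $n_0\in\{32,34,36,38,40\}$ yields a strictly critical snark of order $n_0+10k$, so together the five residue classes cover every even $n\ge 32$. The main obstacle is the colouring-pattern bookkeeping for the dot product: one must argue that a generic choice of $e_1,e_2$ guarantees criticality of $G$ (especially for adjacent pairs meeting the cut) while simultaneously preserving the witness $\{u,v\}$ of non-bicriticality. Satisfying both properties in the same graph requires precise avoidance conditions on $e_1$ and $e_2$ relative to $u,v$ and to the colouring classes of $H$, and this is where the bulk of the technical verification lies.
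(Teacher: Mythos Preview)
The paper does not prove this theorem; it is quoted from Chladn\'y and \v{S}koviera \cite{Chladny-Skoviera-Factorisations} as a known result, so there is no in-paper proof to compare against. That said, your sketch has two concrete problems worth flagging before you invest more effort.

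First, the order arithmetic is off. In the standard dot product $H\cdot P$ one removes two nonadjacent edges from $H$ and two \emph{adjacent vertices} from $P$, then matches the resulting four half-edges on each side; the order is $|H|+|P|-2$. With the Petersen graph this increments the order by $8$, not $10$. What you describe instead---deleting two independent edges from \emph{each} side and joining across a $4$-edge cut---is not the dot product, and the sentence ``$G$ is a snark because a dot product of two snarks is always a snark'' no longer applies to it. Either fix the construction (and then you need only four seeds, for the even residues modulo $8$) or justify uncolourability of your variant separately.

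Second, and more seriously, the step ``the standard dot-product analysis \ldots\ shows that every adjacent pair of vertices in $G$ admits a $3$-edge-colourable complement, provided $e_1,e_2$ are chosen generically'' is exactly the content of the theorem, not a routine verification. The discussion in the present paper (just after Theorem~\ref{thm:conn4}) already warns that the relationship between criticality and dot-product factorisation is delicate: a strictly critical snark can have a dot-product factor that is not even critical. So criticality is genuinely not preserved by arbitrary dot products, and ``generic'' must be made precise and proved to exist in every $H$ you feed into the iteration. Your own final paragraph concedes this is ``the bulk of the technical verification'' and leaves it undone; together with the unproved existence of seeds at orders $34$, $36$, $38$, $40$, what remains is a plan rather than a proof.
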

\begin{figure}
    \centering
    \includegraphics{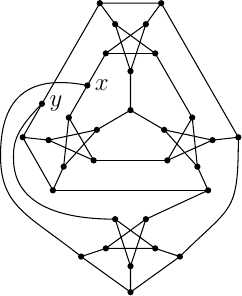}
    \caption{A strictly critical snark of the smallest possible order $32$ with the only removable pair $\{x, y\}$ of vertices.}
    \label{fig:scrit-32}
\end{figure}
There are exactly 64~326~024 cyclically $4$-connected snarks of girth at least 5 with order not exceeding 36, of which 55~172 are critical \cite{BCGM}. Somewhat surprisingly, a vast majority of these snarks are bicritical, only 846 being strictly critical,
a little over 1.5 percent \cite{BCGM, Macajova21}.
This fact does not seem to have any obvious explanation.

All strictly critical snarks discovered so far have cyclic
connectivity $4$ or $5$. As already mentioned, the smallest strictly critical snark has order $32$. Its cyclic connectivity is $4$. The smallest cyclically $5$-connected strictly critical snarks have order 36 (and there are 84 such snarks \cite[Section~7]{Morphology}). It is not known how common cyclically $5$-connected strictly critical snarks are. An infinite family of them was provided by Gr\"unevald and Steffen \cite{Grunewald}, but the graphs in the family are rather large (the smallest of them has $66$ vertices) and only a small proportion of orders of snarks is covered.

In Section~\ref{sec:sc-6-6}, we demonstrate the existence of strictly critical snarks with cyclic connectivity $6$, proving an analogue of Theorem~\ref{thm:conn4} and solving Problem~6.3 from \cite{Chladny-Skoviera-Factorisations}.

\begin{theorem}\label{thm:main}
There exists a cyclically $6$-connected strictly critical snark 
of order $n$ for each even $n\ge 342$.
\end{theorem}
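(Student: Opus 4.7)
The plan is to build one cyclically $6$-connected strictly critical snark $G_0$ of a fixed base order, and then exhibit a localised enlargement operation that increases the order by a small fixed even amount while preserving all four relevant properties: uncolourability, criticality, non-bicriticality, and cyclic $6$-connectivity. Iterating the operation (and combining two operations with coprime even increments, say $2$ and $4$, or two base graphs of consecutive even orders) then sweeps out every even $n \ge 342$, with $342$ arising as the threshold above which the numerical combinatorics of the increments saturates every residue class.

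The base snark $G_0$ I would construct by superposition, in the spirit of Kochol's technique as adapted in \cite{Chladny-Skoviera-Factorisations}. The idea is to start from a small strictly critical snark $S$ (for instance the order-$32$ snark of Figure~\ref{fig:scrit-32}, or one of the $84$ cyclically $5$-connected strictly critical snarks of order $36$) and isolate a \emph{witness pair} $\{x,y\}$ whose removal leaves an uncolourable graph. I would then replace every vertex or edge of $S$ \emph{other than those inside a small neighbourhood of the witness} by a suitable uncolourable multipole whose internal structure is large and ``rigid enough'' so that (i) any cyclic edge cut of $G_0$ of size at most $5$ is forced to lie inside some inserted multipole, and (ii) the inserted multipoles admit $3$-edge-colourings in all the precise ways needed to transfer criticality of $S$ to criticality of $G_0$. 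The neighbourhood of $\{x, y\}$ is left untouched precisely so that the existing obstruction to $3$-edge-colouring $S - \{x, y\}$ lifts to an obstruction in $G_0 - \{x, y\}$.

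Step by step: first I would fix the multipole palette, proving the needed cyclic-connectivity and colouring properties of each multipole by case analysis on their $3$-edge-colourings; second, I would verify that the superposed $G_0$ is cyclically $6$-connected, by a standard argument that any small cyclic cut of $G_0$ would project to a cyclic cut of the same or smaller size in some inserted multipole or in $S$ itself; third, I would verify criticality by showing that for every edge $uv$ of $G_0$, a $3$-edge-colouring of $G_0 - \{u,v\}$ can be assembled from colourings of the pieces, using that $S$ is critical; fourth, I would verify non-bicriticality by showing that any hypothetical $3$-edge-colouring of $G_0 - \{x,y\}$ restricts to one of $S - \{x,y\}$, contradicting the choice of witness. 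Finally, I would introduce the enlargement operation — inserting a small ``padding'' gadget into a designated region far from $\{x,y\}$ — and verify that the same four properties are preserved, so that iterated application produces snarks of every sufficiently large even order.

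The hardest step is the second one combined with the third: preserving criticality while simultaneously forbidding cyclic cuts of size $\le 5$. Cyclic $6$-connectivity forces the multipoles (and the padding gadget) to be internally substantial, which in turn enlarges the number of vertex pairs that must be checked for criticality. The threshold $342$ will essentially be the smallest order at which the multipole palette plus padding can be simultaneously arranged so that every cyclic cut has size at least $6$ and every adjacent pair removal admits a global reassembly of colourings; below that order the multipoles are simply too small to enforce both conditions at once. Verifying strict criticality, by contrast, is the easiest step, because the witness pair is protected by being kept in an unmodified region of the base snark $S$.
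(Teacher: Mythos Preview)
Your high-level architecture matches the paper's: build a base example by superposition over a small strictly critical snark and then enlarge by a local substitution. But the proposal is a sketch rather than a proof, and one of its concrete strategic choices is unworkable. You propose to leave a neighbourhood of the witness pair $\{x,y\}$ \emph{untouched} so that non-bicriticality transfers trivially. The available base snarks (order $32$ or $36$) have girth~$5$, and in a cubic graph cyclic $6$-connectivity forces girth $\ge 6$; any unmodified region large enough to contain the witness vertices and their incident edges will typically meet a $5$-cycle of the base snark, so the superposed graph cannot be cyclically $6$-connected. The paper avoids this trap by the opposite move: it superposes along a single long cycle of the base snark that \emph{does} pass through the witness vertices, and proves a lifting lemma (if $\{u,v\}$ is removable in $G$ and every superedge is proper, then the corresponding supervertex vertices form a removable pair in $\tilde G$). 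Non-bicriticality is thus inherited without protecting any region, and the girth obstruction disappears because every vertex of the chosen cycle is absorbed into the supercycle.

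Several smaller points also need work. Your ``coprime even increments, say $2$ and $4$'' are not coprime; more to the point, the paper's natural enlargement (replacing one Isaacs superedge $A_5$ by $A_{5+2k}$) adds $8k$ vertices, so four separate base constructions---cycles of lengths $15$, $16$, $16$, $17$ in snarks of orders $36$, $32$, $36$, $36$---are needed to cover the four even residue classes modulo~$8$, and $342$ is simply the largest of the four starting orders. Your explanation of the threshold as ``the smallest order at which the multipole palette can be arranged'' is not a computation. Finally, the criticality verification is the real work: the paper's Theorem~\ref{thm:superposition} is a five-case analysis relying on precise colouring lemmas for Isaacs $(3,3)$-poles and for superpaths, and even then cyclic $6$-connectivity of the four base graphs is checked by computer rather than by the projection argument you sketch. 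Your outline names the right tasks but does not carry any of them out.
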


We discuss the situation for orders below $342$ at the end of Section~\ref{sec:sc-6-6} (see Theorem~\ref{thm:sc-cc6} and Problem~\ref{problem1}).

\medskip

In addition to cyclically 6-connected strictly critical snarks we present a rich family of cyclically 5-connected strictly critical snarks with girth 6. These snarks are considerably smaller than the discovered snarks with cyclic connectivity 6. 

\begin{theorem}
There exists a cyclically $5$-connected strictly critical snark with girth $6$ and order $n$ for each $n\ge 66$ such that $n\equiv 2\pmod 8$.
\label{thm:cyc5}
\end{theorem}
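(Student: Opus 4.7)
The plan is to construct the desired snarks by induction on $n\equiv 2\pmod 8$, starting from a base snark of order $66$ and enlarging it by eight vertices at each step via a local surgery that simultaneously preserves being a snark, cyclic connectivity $5$, girth $6$, and strict criticality.

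For the base case I would exhibit an explicit cyclically $5$-connected strictly critical snark $G_{66}$ of girth $6$. Natural candidates are the smallest members of the Gr\"unewald--Steffen family from \cite{Grunewald}, which already have order $66$ and are known to be cyclically $5$-connected and strictly critical; one only needs to verify that some member has girth $6$ (or to produce a small ad hoc variant that does) and to isolate a specific non-removable vertex pair $\{x_0,y_0\}$ of $G_{66}$ that will persist throughout the induction.

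The inductive step would rely on an $8$-vertex multipole $M$ with four semi-edges, inserted so as to replace a suitably chosen pair of edges of the current snark $G_n$. The multipole must be \emph{colour-transparent}: the colour patterns induced on its four semi-edges by proper $3$-edge-colourings of $M$ must coincide exactly with those realised by the pair of edges it replaces, so that uncolourability of $G_{n+8}$ follows from uncolourability of $G_n$. In addition, every cycle through $M$ must have length at least $6$, and $M$ must be grafted onto a pair of edges of $G_n$ whose neighbourhoods are sufficiently spread out that no cycle shorter than $6$ and no cyclic edge cut smaller than $5$ arises in $G_{n+8}$. For criticality of $G_{n+8}$ one has to exhibit, for every adjacent pair $\{u,v\}$ of $G_{n+8}$, a proper $3$-edge-colouring of $G_{n+8}-\{u,v\}$; the three cases $\{u,v\}\subseteq V(M)$, $\{u,v\}\cap V(M)=\emptyset$, and $\{u,v\}$ straddling the interface would be handled by combining a colouring of the corresponding reduction of $G_n$ with a compatible pattern on $M$. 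Finally, choosing the insertion site so that $\{x_0,y_0\}$ always lies outside the copies of $M$ and invoking colour-transparency guarantees that $\{x_0,y_0\}$ remains a non-removable pair in $G_{n+8}$, so the graph is still not bicritical. Iterating the surgery $k$ times on $G_{66}$ then yields a snark of the required type on $66+8k$ vertices for every $k\ge 0$.

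The main obstacle is the simultaneous verification of criticality and non-bicriticality across the interface: $M$ must be colour-rigid enough to preserve the parity obstruction that keeps $\{x_0,y_0\}$ non-removable, yet flexible enough that every local removal of an adjacent pair can be absorbed by an adjustment on $M$'s side of the interface. This reduces to a finite check on $M$ alone: list the colour patterns attainable on its four semi-edges after the deletion of each adjacent pair of vertices of $M$, and verify that these patterns match the admissible patterns on the replaced edges in the relevant reductions of $G_n$ outside $M$. Once this finite case analysis is settled for one good pair (multipole, insertion rule), the induction runs uniformly and Theorem~\ref{thm:cyc5} follows.
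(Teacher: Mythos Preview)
Your proposal is a strategy outline, not a proof: both load-bearing ingredients---the base snark $G_{66}$ and the $8$-vertex multipole $M$---are left unconstructed, and neither is routine. For the base case, the Gr\"unewald--Steffen snarks are cyclically $5$-connected and strictly critical, but they are not known to have girth $6$; appealing to them does not settle the case $n=66$, and ``producing a small ad hoc variant that does'' is precisely the construction problem at hand. The paper instead builds $G_{66}$ explicitly as $H_6*\TTTsc(T_1,T_2,T_3)$ with each $T_i$ a proper $(2,3)$-pole cut from the Isaacs snark $J_5$ so as to destroy its unique $5$-cycle, and proves strict criticality via the structural Theorem~\ref{thm:good-sc}, a four-case analysis that relies on the $T_i$ being \emph{perfect} and \emph{good}.

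For the growth step there is a genuine tension hidden in your requirements on $M$. Colour-transparency means the interface colourings of $M$ are exactly the parallel patterns $(a,a,b,b)$. But the complementary $4$-pole $G_n-(e,f)$, being cut from a snark, admits only heterochromatic interface patterns: the two ends of each severed edge must receive distinct colours, since otherwise the colouring would close up to one of $G_n$. Hence for $\{u,v\}\subseteq V(M)$, colouring $G_{n+8}-[u,v]$ forces $M-[u,v]$ to produce a heterochromatic interface pattern that moreover lies in $\col(G_n-(e,f))$---and this must hold for \emph{every} adjacent pair inside $M$, while $M$ has only eight vertices and must also avoid creating cycles of length at most $5$. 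You acknowledge this as ``the main obstacle'' but do not resolve it; no candidate $M$ is proposed, so the promised finite check cannot even begin. The paper sidesteps this entirely: its growth step swaps one Isaacs block $J_5$ for $J_{5+2k}$ inside a single $T_i$ and simply reapplies Theorem~\ref{thm:good-sc} to the new triple, so criticality is established globally at each order rather than patched across a local interface.
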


We prove this theorem in Section~\ref{sec:cc5}. The construction is inspired by specimens found among critical snarks of order $36$ and analysed in Section~7 of \emph{Morphology} \cite{Morphology}. Our present work can thus be considered as a supplement to Section~8 of \cite{Morphology} where we have generalised small examples of snarks with desirable properties into abundant families of bicritical snarks.

Snarks with cyclic connectivity or girth greater than 5 are interesting for several reasons. First, cyclic connectivity and girth are important measures of complexity of cubic graphs.
For example, smallest cubic graphs of given girth, called \emph{cages}, have been studied for decades.
Second, these two parameters have strong influence on structural properties of cubic graphs, including their edge-colourability. Some 40 years ago it was conjectured \cite{Jaeger} that snarks with cyclic connectivity or girth greater than 6 do not exist. It took 16 years to refute the girth conjecture \cite{Superposition}, while the conjecture on cyclic connectivity remains open and is essentially intact. It is thus interesting, for every property of a snark, to ask whether there are cyclically 6-connected snarks having that property --- we might discover that some nontrivial property does not hold for snarks of higher cyclic connectivity and that might shed some light on the Jaeger's connectivity conjecture. It has also been conjectured \cite[Conjecture~1]{Nedela} that bicritical snarks have girth at most~6. We might learn more about this problem by trying to construct strictly critical and bicritical snarks with higher girth.
Finally, the third reason for our interest in cyclically $6$-connected strictly critical snarks is their complete absence among known snarks with at most 38 vertices, which distinguishes them from bicritical ones.

\section{Preliminaries}

\subsubsection*{Basics}
All graphs considered in this paper are undirected; they may contain loops and parallel edges, although their presence will often be excluded by connectivity requirements. Recall that a cubic graph is cyclically \emph{$k$-connected} if no set of fewer than $k$ edges separates two cycles of $G$ from each other. The \emph{cyclic connectivity} of $G$ is the largest integer $k$ such that $G$ is cyclically $k$-connected.

\subsubsection*{Multipoles}

Multipoles are often used as building blocks for large graphs. Informally, a \emph{multipole} is a graph permitted to contain dangling edges and isolated edges. Each edge has two ends, which may but need not be incident with a vertex. A \emph{dangling edge} has only one end incident with a vertex while \emph{isolated edge} has neither.
A \emph{link} is an edge with both ends incident with a vertex.
An edge end incident with no vertex is called a \emph{semiedge} and a multipole with $k$ semiedges is a \emph{$k$-pole}. The set of all semiedges of $M$ is denoted by $S(M)$. The semiedges of a multipole $M$ can be grouped into pairwise disjoint \emph{connectors} $S_1, \dots, S_t$ (each connector comes with a linear ordering of its semiedges). Such multipole is called a $(|S_1|, \dots, |S_t|)$-pole and is denoted by $M(S_1,\dots,S_t)$.  In this article, we only deal with cubic multipoles, which means that each vertex is incident with exactly three edge ends.

The \emph{width} of a connector is the number of semiedges it contains. A connector of width $k$ is called a \emph{$k$-connector}. The \emph{junction} of two semiedges is performed by joining the semiedges together, creating an ordinary edge from two dangling edges. We can also perform a junction of two connectors of the same width or two $k$-poles by performing individual junctions for each pair of semiedges (this is the place where the linear ordering of semiedges defined for each connector comes into play). 

Formal definitions of these notions and other related information can be found in \cite{Morphology}.

For a $k$-pole $M(S_1, S_2, \dots, S_t)$ and its vertex $v$ we denote by $M - v$ a $(k+3)$-pole $N(S_1, S_2, \dots, S_t, S_{t+1})$ constructed by removing the vertex $v$ from the multipole $M$ and putting the three semiedges formerly incident with $v$ into the connector $S_{k+1}$. Note that if the vertex $v$ is incident with a dangling edge $e$, then $e$ becomes an isolated edge and the ends of $e$ are retained in the multipole $M - v$. To keep our notation short, we shall write $M - (v_1,v_2,\dots,v_r)$ instead of $(((M - v_1) - v_2) - \dots )- v_r$. Note that this operation is not commutative: the order of the connectors changes with the order of removed vertices. When $u$ and $v$ are adjacent vertices of a $(k_1, \dots, k_t)$-pole $M$, we denote $M - [u, v]$ the $(k_1, \dots, k_t, 2, 2)$-pole obtained from the $(k_1, \dots, k_t, 3,3)$-pole $M - (u, v)$ by deleting the isolated edge obtained from the edge $uv$ of $M$.

Assume that $e$ is a link of a multipole $M(S_1, \penalty0 S_2, \dots, S_k)$. By cutting $e$ into two dangling edges ending with semiedges $f_1$ and $f_2$, respectively, we construct a new multipole $N(S_1, S_2, \dots, S_{k}, (f_1, f_2))$ which we denote by $M - e$. Again, we write $M~-~(e_1, e_2, \dots, e_s)$ instead of $(((M - e_1) - e_2) - \dots )- e_s$. Finally, for a vertex $v$ of a $(k + 1)$-pole $M$ incident with exactly one dangling edge $e$, we denote by $M \sim v$ the $k$-pole obtained from $M$ by deleting the dangling edge $e$ and by suppressing the vertex $v$.

\subsubsection*{Colourings and flows}

A \emph{$3$-edge-colouring} of a multipole $M$, or just a \emph{colouring} for short, is an assignment of colours to the edges of $M$ such that no two edges incident with the same vertex receive the same colour. We extend the notion of a colouring to the ends of edges in a natural fashion: an end of an edge (in particular, a semiedge) has the same colour as the edge it belongs to. Both ends of an edge thus always have the same colour.

It has become standard to take $\K = \mathbb{Z}_2 \times \mathbb{Z}_2 - \{(0, 0)\}$ for the set of colours.
With this choice, a mapping $\varphi\colon E(M)\to\K$ 
is a colouring if and only if, for any three edges $e$, $f$, and $g$ incident with the same vertex, one has
$\varphi(e) + \varphi(f) + \varphi(g) = 0$. This means that a colouring of a 
multipole is a nowhere-zero $(\klein)$-flow and vice versa. A straightforward consequence of this fact is the following lemma.

\begin{lemma}[Parity Lemma \cite{Descartes}]\label{lemma:parity}
	Let $M$ be a $k$-pole and let $k_1$, $k_2$, $k_3$ be the
	numbers of dangling edges of colour $(0,1)$, $(1,0)$, $(1,1)$, respectively. Then
	$$ k_1 \equiv k_2 \equiv k_3 \equiv k \pmod{2}.$$
\end{lemma}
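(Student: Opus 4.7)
The plan is to exploit the reformulation that the paper has just set up: a colouring of $M$ is the same thing as a nowhere-zero $(\klein)$-flow, so at every vertex $v$ the three incident edge-ends $e_1,e_2,e_3$ satisfy $\varphi(e_1)+\varphi(e_2)+\varphi(e_3)=0$. I would sum this local identity over all vertices of $M$, obtaining a single global equation in $\klein$.

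The next step is bookkeeping: classify each edge of $M$ by the number of its ends that meet a vertex. A link has both ends at vertices, so its colour appears twice in the sum and cancels in characteristic $2$. An isolated edge has no end at a vertex and contributes nothing. A dangling edge has exactly one end at a vertex and contributes its colour once. The total therefore collapses to
$$\sum_{e \in S(M)} \varphi(e) = k_1\,(0,1)+k_2\,(1,0)+k_3\,(1,1)=(0,0).$$

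Reading off the two coordinates yields $k_2+k_3\equiv 0$ and $k_1+k_3\equiv 0\pmod 2$, which is equivalent to $k_1\equiv k_2\equiv k_3\pmod 2$. Since $k=k_1+k_2+k_3$ is the total number of semiedges and the three summands share a common parity, $k\equiv 3k_1\equiv k_1\pmod 2$, closing the chain of congruences claimed by the lemma.

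The only place that requires genuine care is the edge-end accounting: one has to remember that the paper's multipole model explicitly permits isolated edges (which contribute nothing to the vertex sum) and that a dangling edge is counted exactly once, not twice. After that is sorted out, the rest is a one-line computation in $\klein$, so I do not anticipate any real obstacle.
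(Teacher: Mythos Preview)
Your argument is correct and is exactly the standard derivation. Note, however, that the paper does not actually supply a proof of this lemma: it is stated with a citation to \cite{Descartes} and introduced merely as ``a straightforward consequence'' of the identification of colourings with nowhere-zero $(\klein)$-flows. Your proposal simply spells out that straightforward consequence, so there is nothing to compare against.
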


For any ordered set $A = (e_1, e_2, \dots, e_k)$ of semiedges
(in particular, a connector) we denote $(\phi(e_1), \phi(e_2),
\dots, \phi(e_k))$ by $\phi(A)$. The \emph{colouring set} of a
multipole $M$ is the set
$$\col(M) = \{ \phi(S(M)) \mid \phi \text{ is a colouring of } M\}.$$
Two multipoles $M_1$ and $M_2$ are said to be \emph{colour-equivalent} if $\col(M_1) = \col(M_2)$.

The \emph{flow through a connector $S$} of $M$ is the value
$$\phisum(S) = \sum_{e \in S} \phi(e).$$ 

A connector $S$ of $M$ is called \emph{proper} if $\phisum(S) \ne 0$ for each colouring $\phi$ of $M$. If all connectors of a multipole $M$ are proper, we say that $M$ itself is \emph{proper}. Note that every uncolourable multipole is proper. 

\section{Cyclically 5-connected strictly critical snarks of girth~6}
\label{sec:cc5}

In this section we prove Theorem~\ref{thm:cyc5} by generalising a structure discovered in cyclically $5$-connected strictly critical snarks of order 36 and described in \cite[Section~7]{Morphology}. The structure can be built from three $(2,3)$-poles as follows (see Figure~\ref{fig:ttt-sc}).

\begin{definition}
    For three $(2,3)$-poles $T_1$, $T_2$, and $T_3$, let $\TTTsc(T_1, T_2, T_3)$ denote the $(2,2,2)$-pole constructed from the disjoint union 
    $T_1\cup T_2 \cup T_3$
    by adding three new vertices $v_1$, $v_2$, and $v_3$ and by attaching the dangling edges from the $3$-connectors to them in such a way that each $v_i$ becomes incident with exactly one edge from each $3$-connector. 
\end{definition}

\begin{figure}[h]
	\centering
	\includegraphics{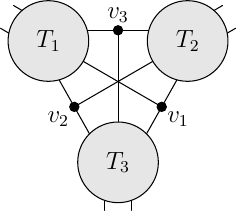}
	\caption{A $(2,2,2)$-pole found in cyclically $5$-connected strictly critical snarks of order 36}
	\label{fig:ttt-sc}
\end{figure}


In order to produce a snark containing a $(2,2,2)$-pole $\TTTsc$ we require the three 
$(2,3)$-poles $T_1$, $T_2$, and $T_3$ be proper. It is well known and easy to see that if $G$ is a snark, $e$ is an edge of~$G$, and $v$ is a vertex of $G$, then the $(2, 3)$-pole 
$T(B, C) = (G - e) - v$ is proper. It means that for each colouring $\varphi$ of $T$ we have
\begin{equation}
    \phisum(B) \neq 0 \qquad \hbox{and} \qquad \phisum(C) \neq 0.
    \label{eq:proper23}
\end{equation}
If $T$ admits all colourings satisfying both \eqref{eq:proper23} and Parity Lemma, it is called  \emph{perfect}.  To fulfil the connectivity requirements of the resulting snarks it is convenient to assume that the $(2,3)$-pole $(G - e) - v$ is created by using pairs $e$ and $v$ such that $v$ is not adjacent to any endvertex of $e$.
For more details on proper $(2,3)$-poles we refer the reader to  \cite[Section~5.2]{Morphology}.

Next, we create a $(2,2,2)$-pole $H_6$ from a $6$-cycle 
$(u_1u_2u_3v_4u_5u_6)$, encoded as a cyclic sequence of vertices, by attaching a dangling edge $e_i$ to the vertex $u_i$ for each $i\in\{1,\ldots,6\}$. We partition the dangling edges into three connectors $(e_1, e_4)$, $(e_2, e_5)$, $(e_3, e_6)$ and join the resulting $2,2,2)$-pole to $\TTTsc(T_1, T_2, T_3)$. 
Strictly speaking, there are several ways of how to join $H_6$ to $\TTTsc(T_1, T_2, T_3)$, but due to the symmetry of $H_6$, there is essentially only one way that preserves the connectors. All other graphs can be obtained by permuting the multipoles $T_1$, $T_2$, and $T_3$ and by switching the pairs of edges within individual $2$-connectors of $T_1$, $T_2$, and $T_3$. For the purpose of our proofs, we can therefore regard the join $H_6*\TTTsc(T_1, T_2, T_3)$ as uniquely determined.

Note that usually, and also in the next section, the cycles are regarded as $6$-poles with their semiedges ordered in one of the natural cyclic orders. The considered partition of semiedges into three connectors in the $(2,2,2)$-pole $H_6$ is specifically needed for this construction and it also reflects some of the colouring properties that are specific to the $6$-cycle.




\begin{figure}[h]
    \centering
    \includegraphics{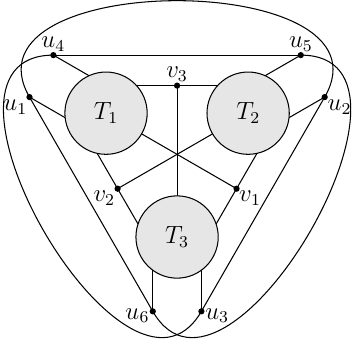}
    \caption{The structure of the snark $H_6*\TTTsc(T_1, T_2, T_3)$}
    \label{fig:ttt-sc-complete}
\end{figure}

\begin{lemma}\label{lemma:ttt-sc-snark}
    For any proper $(2, 3)$-poles $T_1$, $T_2$, and $T_3$ the graph $G = H_6*\TTTsc(T_1, T_2, T_3)$ is a snark in which each pair of vertices $v_1$, $v_2$ and $v_3$ is removable. In particular, $G$ is not bicritical.
\end{lemma}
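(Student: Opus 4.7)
The plan is to establish a single colouring obstruction inside $H_6$ and then apply it twice: once to force uncolourability of $G$ itself, and once (with almost no extra work) to force uncolourability of $G - \{v_i, v_j\}$ for every pair $i \neq j$ drawn from $\{1,2,3\}$.

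The central step is a lemma about $H_6$ alone: in every proper $3$-edge-colouring $\phi$ of the $(2,2,2)$-pole $H_6$, at least one of $\phisum(P_1), \phisum(P_2), \phisum(P_3)$ vanishes (writing $P_k$ for the $k$-th $2$-connector of $H_6$). I would prove this by recording the cycle-edge colours as $x_1, \dots, x_6 \in \K$ with $x_i \neq x_{i+1}$ cyclically, so that the dangling edge at $u_i$ is forced to have colour $e_i = x_{i-1} + x_i$. A direct calculation then gives $\phisum(P_1) + \phisum(P_2) + \phisum(P_3) = 0$, hence if all three flows were nonzero they would form a permutation of $\K$. Parameterising by the first three cycle edges and using the $S_3$-symmetry on colours, only the two cases $(x_1, x_2, x_3) = (a, b, c)$ and $(x_1, x_2, x_3) = (a, b, a)$ need treatment, and in each, the requirements that $x_4, x_5, x_6 \in \K$, that consecutive cycle edges differ, and that $\phisum(P_1), \phisum(P_2), \phisum(P_3)$ exhaust $\K$ force an immediate contradiction on $x_4, x_5, x_6$. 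This short but delicate case analysis is the main technical obstacle.

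Given the lemma, both conclusions of the statement follow from the same mechanism. Since $B_k$ of $T_k$ is joined to $P_k$ of $H_6$, any colouring $\phi$ of $G$ satisfies $\phisum(P_k) = \phisum(B_k)$ for each $k$, and properness of $T_k$ yields $\phisum(B_k) \neq 0$; thus all three $\phisum(P_k)$ are nonzero, contradicting the lemma. Combined with cubicity (clear from the construction) and connectedness (each $v_k$ is adjacent to a vertex of every $T_k$, and $H_6$ is joined into each $T_k$ via $B_k$), this shows $G$ is a snark. For removability, fix $i \neq j$ and let $G' = G - \{v_i, v_j\}$. Deleting $v_i, v_j$ removes only the six edges incident to them and leaves $H_6$ and every $B_k$ intact. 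Any proper $3$-edge-colouring of $G'$ extends uniquely to a proper colouring of each multipole $T_k$: the two vertices of $T_k$ that lost a $3$-connector edge have degree $2$ in $G'$, and the missing colour at each is forced. Properness of $T_k$ then again gives $\phisum(P_k) \neq 0$ for every $k$, contradicting the $H_6$ lemma. Hence $G'$ is uncolourable and $\{v_i, v_j\}$ is a removable pair of vertices; since $v_1, v_2, v_3$ are pairwise non-adjacent, this shows $G$ is not bicritical.
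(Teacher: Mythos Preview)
Your proposal is correct and follows essentially the same approach as the paper: both hinge on the incompatibility between $H_6$ (where not all three $2$-connector flows can be nonzero) and $\TTTsc(T_1,T_2,T_3)$ (whose three $2$-connectors stay proper even after deleting any of the $v_i$). The only difference is presentational: the paper invokes the known fact that $H_6$ is an \emph{even} $(2,2,2)$-pole from \cite{Morphology} and states the persistence of properness after deleting $v_i,v_j$ as a one-line observation, whereas you prove the needed special case of the $H_6$ lemma by a direct case analysis and spell out the degree-$2$ extension argument explicitly.
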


\begin{proof}
It is well known that $H_6$ is an \emph{even} $(2,2,2)$-pole, which means that the number of proper connectors is always even (see \cite[Section~5.4]{Morphology}). By contrast,
each connector of $\TTTsc(T_1, T_2, T_3)$ is proper, so $H_6*\TTTsc(T_1, T_2, T_3)$ is indeed a snark. Observe that all three $2$-connectors of $\TTTsc(T_1, T_2, T_3)$ will be proper regardless of the presence of the vertices $v_1$, $v_2$ and $v_3$. Thus, any two of them form a removable pair vertices.
\end{proof}

In general, $H_6*\TTTsc(T_1, T_2, T_3)$ need not be a critical snark, and this may occur even in the case where the all three $(2,3)$-poles are taken from a critical snark. The reason is that a proper $(2,3)$-pole  $T= (G - e) - v$ constructed from a critical snark may happen to be uncolourable.
Thus to ensure the criticality we need an additional rather technical property based on the following definition.


%
%

\begin{definition}[Chladný and Škoviera \cite{Chladny-Skoviera-Factorisations}]
     A pair of distinct edges $\{e,f\}$ of a snark $G$ is \textit{essential} if it is non-removable and for every vertex $v$ of the $4$-pole $G - (e, f)$ incident with some dangling edge, the $3$-pole $G \sim v$ is colourable.
\end{definition}

\begin{definition}
	\label{def:good-proper}
	A proper $(2,3)$-pole  $T = (G - e) - v$ is called \emph{good} if, for every endvertex $w$ of the edge $e$ and every pair of edges $f$ and $g$ such that $f$ is incident with $w$ and $g$ is incident with $v$, the pair $\{ f, g \}$ is essential in $G$.
\end{definition}

The following lemma asserts that for certain multipoles, including good proper $(2,3)$-poles, we can find a colouring in which two prescribed semiedges have the same colour.
\begin{lemma}\label{lemma:good-proper}
	Let $G$ be a snark and let $x$ and $y$ be non-adjacent vertices such that for any two edges $e$ and $f$ incident with $x$ and $y$, respectively, the pair $\{e, f\}$ is essential in $G$.
    Then there exists a colouring $\varphi$ of the $6$-pole $M((e_1, e_2, e_3), (f_1, f_2, f_3)) =  G - (x, y)$ such that $\varphi(e_1) = a$, $\varphi(e_2) = \varphi(e_3)$, $\varphi(f_1) = c$ and $\varphi(f_2) + \varphi(f_3) = b$ for some $a$, $b$, $c$ such that $\{a, b, c\} = \K$.
\end{lemma}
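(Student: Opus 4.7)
Plan. My plan is to produce the required colouring $\varphi$ of $G - (x, y)$ by lifting a colouring of the auxiliary $3$-pole $N := (G - (e_1, f_1)) \sim x$, whose colourability is guaranteed by the essentiality hypothesis applied to the pair $\{e_1, f_1\}$. Unpacking the construction: after cutting $e_1$ and $f_1$ in $G$ and then deleting the dangling half of $e_1$ at $x$ and suppressing $x$, the two surviving edges at $x$ merge into a single edge $e_{23}$ joining the former neighbours $x_2$, $x_3$ of $x$; the three semiedges of $N$ are then the remaining half of $e_1$ at $x'$ and the two halves of the cut edge $f_1$ at $y$ and at $y'$.

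Given a colouring $\psi$ of $N$, I would define $\varphi$ on $G - (x, y)$ by setting $\varphi(e_1) := \psi(e_1|_{x'})$, $\varphi(e_2) := \varphi(e_3) := \psi(e_{23})$, $\varphi(f_1) := \psi(f_1|_{y'})$, $\varphi(f_2) := \psi(f_2)$, $\varphi(f_3) := \psi(f_3)$, and $\varphi := \psi$ on every edge common to both multipoles. Propriety of $\varphi$ can be checked vertex by vertex: the colour multiset at each vertex of $G - (x, y)$ coincides with the one at the same vertex of $N$, where $e_2$ and $e_3$ in $G - (x, y)$ play the roles previously played by the two ends of $e_{23}$ at $x_2$ and $x_3$, and the dangling halves of $f_1, f_2, f_3$ simply lose their opposite endpoint at $y$.

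The heart of the argument is then a short parity computation. Since $N$ has exactly three semiedges, the Parity Lemma forces each colour of $\K$ to occur an odd number of times among them, hence exactly once; therefore $\{\psi(e_1|_{x'}), \psi(f_1|_y), \psi(f_1|_{y'})\} = \K$. Setting $a := \psi(e_1|_{x'})$, $b := \psi(f_1|_y)$, $c := \psi(f_1|_{y'})$, the equalities $\varphi(e_1) = a$, $\varphi(e_2) = \varphi(e_3)$, and $\varphi(f_1) = c$ are immediate, and the remaining condition follows from the standard identity at vertex $y$ of $N$, where the three incident colours $\psi(f_1|_y), \psi(f_2), \psi(f_3)$ form $\K$ and hence sum to $0$ in $\klein$, giving $\varphi(f_2) + \varphi(f_3) = \psi(f_2) + \psi(f_3) = \psi(f_1|_y) = b$.

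The main obstacle is really just bookkeeping: one must verify carefully that the edge-by-edge correspondence between $N$ and $G - (x, y)$ respects propriety at every vertex, even in corner cases where $x$ and $y$ share a common neighbour. Since the transformation only alters the local picture at $x$ and at $y$, this verification is mechanical and does not pose any serious difficulty.
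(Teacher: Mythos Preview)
Your proposal is correct and follows exactly the same approach as the paper: apply the essentiality hypothesis to the pair $\{e_1,f_1\}$ to obtain a colouring of the $3$-pole $(G-(e_1,f_1))\sim x$, and then read off the required colouring of $G-(x,y)$. The paper's proof dismisses the second step as ``straightforward''; your write-up simply spells out those details (the edge-by-edge correspondence and the Parity Lemma computation on the three semiedges), so there is nothing different in substance.
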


\begin{proof}
Since the pair of the edges $\{e_1, f_1\}$ is essential in $G$, the $3$-pole  $(G - (e_1, f_1)) \sim x$ has a colouring from which the desired  colouring of $G - (x, y)$ can be obtained in a straightforward way.
\end{proof}

Next, we recall the following well-known fact on the colouring set of $(2, 2)$-poles obtained by removing a non-removable pair of adjacent vertices from a snark, also called \emph{isochromatic $(2, 2)$-poles}.

\begin{lemma}[{Chladný and Škoviera \cite[Section~3]{Chladny-Skoviera-Factorisations}}]
\label{lemma:isochromatic}
	Let $G$ be a snark and $\{u, v\}$ a pair of adjacent non-removable vertices of $G$. Then $$\col(G - [u, v]) = \{ (a, a, b, b) \mid a, b \in \K \}.$$
\end{lemma}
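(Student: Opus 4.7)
The proof splits into two inclusions.

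For $\col(G - [u,v]) \subseteq \{(a,a,b,b) : a, b \in \K\}$, I would take an arbitrary colouring $\varphi$ of the $4$-pole $G - [u,v]$ and write $(\alpha_1, \alpha_2, \beta_1, \beta_2) = \varphi(e_1, e_2, f_1, f_2)$, where $(e_1,e_2)$ and $(f_1,f_2)$ are the two $2$-connectors. By the Parity Lemma applied with $k = 4$, each colour in $\K$ appears an even number of times among the four dangling edges. Now suppose $\alpha_1 \neq \alpha_2$; the parity requirement on each colour then forces $\{\beta_1, \beta_2\} = \{\alpha_1, \alpha_2\}$ (the third colour $c$ must appear zero times, and $\alpha_1, \alpha_2$ each need a second occurrence). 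Reinserting $u$ and $v$ and colouring the edge $uv$ with $c \in \K \setminus \{\alpha_1, \alpha_2\}$ now extends $\varphi$ to a proper $3$-edge-colouring of $G$, contradicting that $G$ is a snark. Hence $\alpha_1 = \alpha_2$, and symmetrically $\beta_1 = \beta_2$.

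For the reverse inclusion, non-removability of $\{u,v\}$ supplies at least one colouring $\varphi_0$ of $G - \{u,v\}$, which extends uniquely to a colouring of $G - [u,v]$ whose colouring vector is of the form $(a_0, a_0, b_0, b_0)$ by the previous paragraph. The natural action of $S_3$ on $\K$ carries colourings to colourings, so from $\varphi_0$ I already obtain every vector in the $S_3$-orbit of $(a_0, a_0, b_0, b_0)$ — six off-diagonal vectors $(a,a,b,b)$ with $a \neq b$ if $a_0 \neq b_0$, or three diagonal vectors $(a,a,a,a)$ if $a_0 = b_0$. To reach the remaining three vectors I would use a Kempe chain argument: the subgraph of $G - [u,v]$ spanned by edges coloured $a_0$ or by a chosen partner colour has degree $2$ at every vertex and degree $1$ at exactly the four relevant semiedges, so it decomposes into cycles plus two open paths whose endpoints are semiedges. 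A path joining an $e$-endpoint to an $f$-endpoint would, after a Kempe swap, yield a colouring violating $\alpha_1 = \alpha_2$; by the already-established $\subseteq$ inclusion this is impossible, so the two paths must go $e_1 \leftrightarrow e_2$ and $f_1 \leftrightarrow f_2$. Swapping colours along only the first path produces a colouring with a new pattern on the $e$-connector and, by re-applying $S_3$, fills in the rest of the claimed colouring set.

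The main obstacle is the $\supseteq$ direction, and more specifically the production of the three colourings outside the initial $S_3$-orbit of $\varphi_0$. The $\subseteq$ inclusion is a short parity-plus-snark-extension argument, non-removability yields the first colouring for free, and colour permutations populate the rest of its orbit, but bridging between the diagonal and off-diagonal colourings genuinely requires a Kempe swap. The delicate point is that the admissibility of that swap — i.e.\ that the relevant Kempe chains do not cross between the $e$- and $f$-connectors — is itself a consequence of the $\subseteq$ inclusion, so the two inclusions must be proved in the stated order.
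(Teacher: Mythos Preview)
The paper does not prove this lemma at all; it is quoted with attribution to Chladn\'y and \v{S}koviera \cite[Section~3]{Chladny-Skoviera-Factorisations} and then used as a black box. There is therefore no in-paper proof to compare against.

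That said, your argument is correct and is essentially the standard one. The $\subseteq$ direction is the expected parity-plus-extension argument: if the two semiedges in one connector received distinct colours, parity would force the other connector to carry the same pair, and then colouring $uv$ with the missing colour would $3$-colour the snark~$G$. For $\supseteq$, non-removability yields a first colouring, permuting colours fills one $S_3$-orbit, and a single Kempe swap bridges the diagonal and off-diagonal orbits; your key observation --- that a Kempe path joining an $e$-semiedge to an $f$-semiedge would, after swapping, produce a colouring forbidden by the already-established $\subseteq$ direction --- is exactly the right way to justify that swap. One small point you should make explicit: in the off-diagonal case $a_0\ne b_0$ the partner colour must be taken to be $b_0$ (so that all four semiedges are endpoints of Kempe chains), and swapping along the $e_1\!\leftrightarrow\! e_2$ path then turns $(a_0,a_0,b_0,b_0)$ into the diagonal $(b_0,b_0,b_0,b_0)$. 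With that clarification the argument is complete.
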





The following theorem is a crucial step towards the proof of Theorem~\ref{thm:cyc5}. It states that three good $(2,3)$-poles in a $\TTT$-multipole are sufficient to produce a critical snark.

\begin{theorem}\label{thm:good-sc}
	Let $T_1$, $T_2$, and $T_3$ be perfect good proper $(2,3)$-poles obtained from critical snarks. Then the snark $G = H_6 * \TTTsc(T_1, T_2, T_3)$ is strictly critical.
\end{theorem}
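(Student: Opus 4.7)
By Lemma~\ref{lemma:ttt-sc-snark} we already know that $G$ is a snark and that the pairs $\{v_i, v_j\}$ are removable, so $G$ is not bicritical. Hence the plan reduces to verifying that $G$ is \textbf{critical}, i.e.\ that for each pair $\{x, y\}$ of adjacent vertices of $G$ the graph $G - \{x, y\}$ is $3$-edge-colourable. Write $T_i = (G_i - e_i) - z_i$, where $G_i$ is a critical snark, $e_i$ is a link of $G_i$, and $z_i$ is a vertex of $G_i$ non-adjacent to the endvertices of $e_i$. The adjacent pairs of $G$ fall into four disjoint types:
(I) both $x$ and $y$ lie inside the same $T_i$;
(II) $x$ is the endvertex inside $T_i$ of a $3$-connector semiedge and $y = v_j$ is the matching new vertex of $\TTTsc(T_1,T_2,T_3)$;
(III) $x$ is the endvertex inside $T_i$ of a $2$-connector semiedge and $y = u_k$ is the matching vertex of $H_6$;
(IV) $x,y$ are consecutive vertices on the $6$-cycle of $H_6$.
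No pair $\{v_i,v_j\}$ is adjacent, so no further cases arise.

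For Case (I) the plan is to exploit perfectness of $T_i$ together with criticality of $G_i$: a $3$-edge-colouring of $G_i - \{x, y\}$, which exists by criticality, restricts to a colouring of the $(2, 3)$-pole $T_i - \{x, y\}$; as this colouring varies, perfectness ensures that every parity-allowed tuple satisfying $\phisum(S) \ne 0$ on both connectors is attained. The complementary multipole --- consisting of $H_6$ together with $T_j$, $T_k$, and the vertices $v_1, v_2, v_3$ --- is proper on its remaining boundary connectors (properness of $T_j$ and $T_k$ plus the colouring behaviour of $H_6$) and admits every parity-allowed boundary colouring. A compatible pair of colourings then glues to a $3$-edge-colouring of $G - \{x, y\}$.

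Cases (II) and (III) are where the goodness hypothesis enters. In Case (II), let $w$ be either endvertex of $e_i$ and let $g$ be the edge of $G_i$ at $z_i$ corresponding to the dangling edge of $T_i$ that is attached to $v_j$ in $\TTTsc(T_1,T_2,T_3)$. By goodness, the pair $\{f,g\}$ is essential for every edge $f$ at $w$, so Lemma~\ref{lemma:good-proper} applied to the non-adjacent vertices $w, z_i$ of $G_i$ furnishes a colouring of the $6$-pole $G_i - (w, z_i)$ of a prescribed shape. Reinterpreting $G_i - (w, z_i)$ as $T_i$ with $v_j$ reattached and then $x$ removed, this colouring descends to a colouring of the local part of $G - \{x, v_j\}$ whose boundary behaviour can be matched with arbitrary colourings of $T_\ell$ for $\ell \neq i$ and a compatible colouring of $H_6$. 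Case (III) is analogous: goodness provides colourings of $T_i - x$ with controlled behaviour on the remaining $2$-connector, and $H_6 - u_k$ is a $5$-vertex subgraph whose colourings are routine to enumerate via the Parity Lemma.

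Case (IV) is the easiest: removing two consecutive vertices of the $6$-cycle leaves a $4$-path in $H_6$ whose dangling-edge colourings realise every parity-allowed tuple on the three $2$-connectors, and properness of all three $T_\ell$ together with the flexibility of the gadget on $v_1, v_2, v_3$ then produces a compatible global colouring. The main obstacle throughout will be the bookkeeping in Cases (II) and (III): one must check that the colouring produced by Lemma~\ref{lemma:good-proper}, whose freedom lies only in the assignment of $a, b, c \in \K$ and in the choice of endvertex $w$, is sufficient to hit the specific boundary pattern demanded by the remaining connectors. Here I expect Lemma~\ref{lemma:isochromatic} and the explicit colouring structure of the even $(2,2,2)$-pole $H_6$ to be the decisive tools.
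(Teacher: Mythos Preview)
Your case decomposition matches the paper's, but two of the four arguments have genuine gaps.

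In Case~(II) you misapply goodness. The vertex $x$ is the endvertex of a $3$-connector semiedge of $T_i$, and since the $3$-connector arises from removing $z_i$, this means $x$ is a \emph{neighbour} of $z_i$ in $G_i$, not an endvertex of $e_i$. Applying Lemma~\ref{lemma:good-proper} to an endvertex $w$ of $e_i$ and $z_i$ gives you a colouring of $G_i-(w,z_i)$, but this object is unrelated to $T_i$ with $x$ removed; your ``reinterpretation'' claim is simply false. Because $x$ and $z_i$ are adjacent in $G_i$, goodness is irrelevant here: what you actually need is criticality of $G_i$ to colour $G_i-[z_i,x]$, together with Lemma~\ref{lemma:isochromatic} to force all four dangling edges to the same colour~$c$. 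One then splits $e_i$ (whose colour $p$ may or may not equal $c$) and handles the two sub-cases separately, exactly as the paper does in its Case~(iv). Goodness is genuinely needed only in your Case~(III), where $x$ is an endvertex of $e_i$ and hence non-adjacent to $z_i$; there Lemma~\ref{lemma:good-proper} applies to the pair $x,z_i$ directly.

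In Case~(I) your appeal to perfectness is misplaced. Perfectness describes $\col(T_i)$; it says nothing about $\col(T_i-\{x,y\})$, so the claim that ``as this colouring varies, perfectness ensures that every parity-allowed tuple \dots\ is attained'' is unsupported. You also assert without proof that the complementary multipole ``admits every parity-allowed boundary colouring'', which is not obvious (the flows through the $2$-connector into $H_6$ and through the three edges to $v_1,v_2,v_3$ are constrained by the properness of $T_j,T_k$ and by the even behaviour of $H_6$). The paper instead uses Lemma~\ref{lemma:isochromatic} to pick a \emph{specific} colouring of $G_i-[x,y]$ with all dangling edges the same colour, reads off the induced colouring of $T_i-[x,y]$, and exhibits an explicit matching colouring of the complement. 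Your plan would need to be rewritten along these lines.
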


\begin{proof}
Assume $T_i = (G_i - e_i) - w_i$ for each $i \in \{1,2,3\}$. Since the snark $G$ is not bicritical by Lemma \ref{lemma:ttt-sc-snark}, it is sufficient to show that it is critical, that is, that for an arbitrary pair of adjacent vertices $x$ and $y$ of $G$ the $4$-pole $M = G - [x,y]$ is colourable. We distinguish four cases.

\paragraph{Case (i).} Both vertices $x$ and $y$ belong to the $6$-cycle in $H_6$.  The $4$-pole $M$ can be coloured as shown in Figure~\ref{fig:ttt-sc-1}. All the $(2,3)$-poles $T_i$ admit colourings as displayed in the figure since they are perfect.

\paragraph{Case (ii).} The vertex $x$ is from the $6$-cycle and (without loss of generality) the vertex $y$ is from $T_1$.
A colouring of the $4$-pole $M$ is indicated in Figure~\ref{fig:ttt-sc-2}. Since $y$ has to be an end vertex of $e_i$, the vertices $y$ and $w_1$ are non-adjacent. So, by Lemma~\ref{lemma:good-proper}, the $6$-pole $N(S_1, S_2) = G_1 - (y, w_1)$ admits a colouring assigning the colours $a$, $p$, $p$ to the semiedges of $S_1$ in the order depicted in Figure~\ref{fig:ttt-sc-2}, and colours $a$, $c$, $c$ to the semiedges of $S_2$ in some order. Due to the symmetry of the multipole $\TTTsc(T_1,T_2,T_3)$, it is irrelevant which semiedge of $S_2$ has colour $a$. 

\begin{figure}[!h]
	\centering
	\begin{minipage}[c]{0.45\textwidth}
		\centering
		\includegraphics{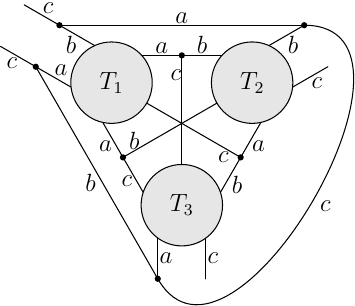}
		\captionof{figure}{Colouring for Case (i)}
		\label{fig:ttt-sc-1}
	\end{minipage}
	\hfill
	\begin{minipage}[c]{0.45\textwidth}
		\centering
		\includegraphics{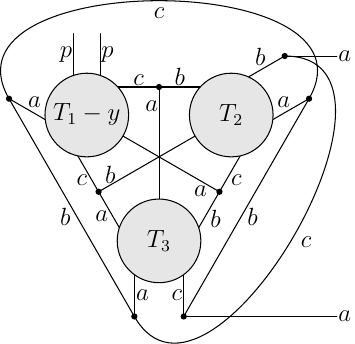}
		\captionof{figure}{Colouring for Case (ii)}
		\label{fig:ttt-sc-2}
	\end{minipage}
\end{figure}
\begin{figure}[h]
	\centering
	\includegraphics{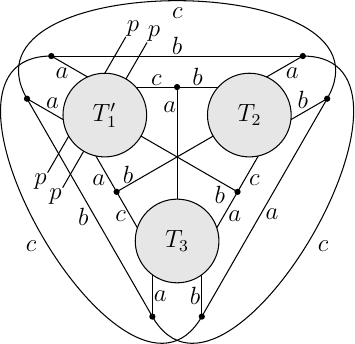}
	\caption{Colouring for Case (iii)}
	\label{fig:ttt-sc-3}
\end{figure}

\paragraph{Case (iii).} Both  $x$ and $y$ belong to the same proper $(2,3)$-pole, say, $T_1$. A colouring of $G - [x,y]$ is illustrated in Figure~\ref{fig:ttt-sc-3}. It includes a colouring of the $9$-pole $T_1' = T_1 - [x, y]$, which can be obtained in the following way. Consider the snark $G_1$ which gives rise to the $(2,3)$-pole $T_1$ by removing an edge $e_1$ and a vertex $w_1$. We start from a colouring of the $4$-pole $G_1 - [x, y]$ such that all the dangling edges have the same colour $p$; such a colouring exists according to Lemma~\ref{lemma:isochromatic}. Denote the colour of the edge $e_1$ of $G_1 - [x,y]$ by $a$. The links incident with the vertex $w_1$ have pairwise distinct colours $a$, $b$, $c$ in some order. By symmetry, the exact order of these colours is not important.
After the removal of the vertex $w_1$ and splitting the link $e_1$, we get the required colouring of $T_1 - [x, y]$.

\paragraph{Case (iv).} The vertex $x$ is from $T_1$ and $y$ is one of $v_1$, $v_2$ and $v_3$, say $y = v_1$.
Initially, we find a colouring of the $6$-pole $T_1 - x \cong (G_1 - [w_1, x]) - e_1$. Since the snark $G_1$ is critical, the $4$-pole $G_1 - [w_1, x]$ is colourable in such a way that all its dangling edges are coloured by the same colour $c$ (by Lemma~\ref{lemma:isochromatic}). Denote the colour of the link $e_1$ of $G_1 - [w_1, x]$ by $p$. After splitting the link $e_1$, we obtain the $6$-pole $T_1 - x \cong (G_1 - [w_1, x]) - e_1$ with its semiedges coloured by $c$, $c$, $c$, $c$, $p$, $p$. If $p \ne c$, the colouring of the $4$-pole $M = G - [x,y]$ is given in Figure~\ref{fig:ttt-sc-4a} (with $p=a$). Otherwise, if $p = c$, we can colour the $4$-pole $M$ according to Figure~\ref{fig:ttt-sc-4b}.
\end{proof}

\begin{figure}[!h]
	\centering
	\begin{subfigure}[c]{0.45\textwidth}
		\centering
		\includegraphics{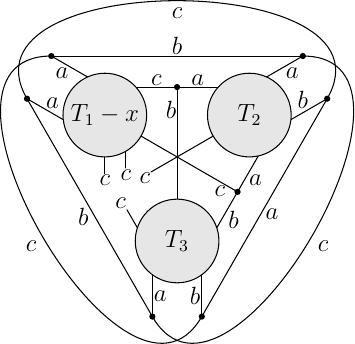}
		\caption{$p = a \ne c$}
		\label{fig:ttt-sc-4a}
	\end{subfigure}
	\hfill
	\begin{subfigure}[c]{0.45\textwidth}
		\centering
		\includegraphics{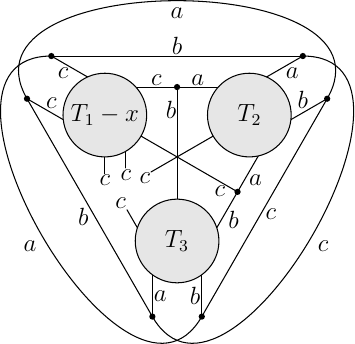}
		\caption{$p = c$}
		\label{fig:ttt-sc-4b}
	\end{subfigure}
	\caption{Colouring for Case (iv)}
\end{figure}

Now we are ready to prove Theorem~\ref{thm:cyc5}.

\begin{proof}[Proof of Theorem~\ref{thm:cyc5}]
For the proof, we take all three proper $(2, 3)$-poles $T_1$, $T_2$ and $T_3$
from the well-known family of Isaacs snarks. It is well known that Isaacs snarks are critical excluding $J_3$, which is not critical \cite[Theorem~4.13]{SteffenClassifications}. If $T_i$ is to be obtained from $J_5$, we choose an edge and a vertex so that their removal destroys the only $5$-cycle of $J_5$. As a result, each $T_i$ has girth $6$. Since every pair of non-adjacent edges in any Isaacs snark except $J_3$ is essential \cite[Example 5.5]{Chladny-Skoviera-Factorisations}, the proper $(2, 3)$-poles $T_1$, $T_2$ and $T_3$ are all good. Theorem~\ref{thm:good-sc} now implies that the graph $H_6*\TTTsc(T_1, T_2, T_3)$ is a strictly critical snark. Moreover, it is clear that it is cyclically $5$-connected and has girth $6$.

The smallest member of our family has $66$ vertices and is depicted in Figure~\ref{fig:ttt-j5}; it is produced by taking each proper $(2, 3)$-pole $T_i$ from the Isaacs snark $J_5$ as described above. In order to construct larger snarks, we can take the snark on $66$ vertices and replace one of the $(2,3)$-poles constructed from $J_5$ with a ($2,3)$-pole constructed from $J_{5+2k}$ where $k$ is an arbitrary positive integer, thereby adding $8k$ vertices. This completes the proof of Theorem~\ref{thm:cyc5}.
\end{proof}

\begin{remark}
{\rm We are not aware of any smaller strictly critical snarks without $5$-cycles. } 
\end{remark}


\begin{figure}
	\centering\includegraphics[]{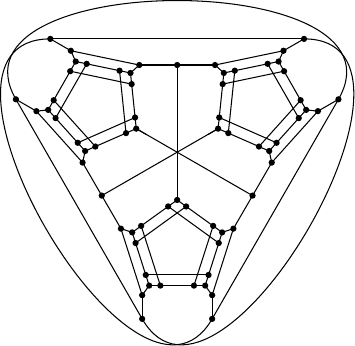}
	\caption{A cyclically 5-connected strictly critical snark of girth $6$ and order $66$}
	\label{fig:ttt-j5}
\end{figure}

\section{Cyclically 6-connected strictly critical snarks}
\label{sec:sc-6-6}

In this section we present a construction of cyclically $6$-connected strictly critical snarks based on superposition. Superposition is a  general construction method producing large cubic graphs (usually snarks) from smaller ones. Here, we restrict ourselves to basic facts about this method, which are  necessary for our exposition, and refer the reader to \cite{Superposition, MS:superp} for more details.

In order to define a superposition, we first choose a \emph{base graph} $G$, which we assume to be cubic and connected. Then we replace each vertex with a~\emph{supervertex}, a multipole with tree connectors, and each edge with a \emph{superedge}, a multipole with two connectors. Finally, we perform all the
junctions between supervertices and superedges that correspond to the incidences between vertices and edges of $G$. The corresponding connectors are required to have the same width in order for the resulting graph $\tilde G$ be cubic. 

There are multiple ways how one can ensure that a superposition produces a snark. According to \cite[Theorem~4]{Superposition}, if the base graph is a snark and each of the superedges used in the superposition is proper, then the graph resulting from superposition is again a snark. Such a superposition is called a \emph{proper superposition}.


In our construction we utilise a specific type of the junction of multipoles. Take two multipoles with three connectors each, an $(i,s,r_1)$-pole $M(I, S_1, R_1)$ and an $(s,o,r_2)$-pole $N(S_2, O, R_2)$ such that $|S_1|=|S_2|$. Following \cite{Macajova}, we define a \emph{serial junction} $M \circ N$ of $M$ and $N$ as the $(i,o,r_1+r_2)$-pole $P(I, O, R_1 \cup R_2)$ which arises by the junction of the connectors $S_1$ and $S_2$ and by merging the connectors $R_1$ and $R_2$ into a single connector $R_1 \cup R_2$. The connectors $R_1$ and $R_2$ are regarded as \emph{residual}, that is, not involved in the junction. We allow $r_1$ or $r_2$ to be zero and treat a multipole $M(S_1, S_2, \emptyset)$ with an empty residual connector as $M(S_1, S_2)$. Given a multipole $M(S_1, S_2, \dots, S_k)$ with $k\ge 2$ and $|S_1| = |S_2|$, we also define the \emph{closure} $\overline{M}$ of $M$ to be the multipole obtained by the junction of $S_1$ and $S_2$.

\begin{figure}[!h]
	\centering
	\begin{minipage}[c]{0.4\textwidth}
		\centering
		\includegraphics{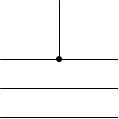}
		\captionof{figure}{The supervertex $W$}
		\label{fig:supervertex}
	\end{minipage}
	\begin{minipage}[c]{0.5\textwidth}
		\centering
		\includegraphics{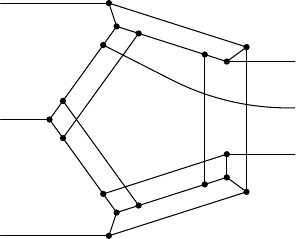}
		\captionof{figure}{The superedge $A_5$}
		\label{fig:superedge}
	\end{minipage}
\end{figure}

We now proceed to our particular superposition construction. 
For supervertices we will take copies of the multipole $W$ depicted in Figure~\ref{fig:supervertex}, which has two $3$-connectors and a $1$-connector in this order.
Our superedges will be created from Isaacs snarks.
For a detailed explanation of properties of Isaacs snarks and relevant notation we refer the reader to \cite[Section 5.5]{Morphology}. 
The \emph{Isaacs superedge} $A_5$ is obtained by removing two specific vertices at distance $4$ from the Isaacs snark $J_5$ as shown in Figure~\ref{fig:superedge}. For any odd $k \ge 7$, the \emph{Isaacs superedge} $A_k$ is constructed by substituting the $6$-pole $Y_{k - 3}$ for the $6$-pole $Y_2$ contained in $A_5$. Note that $A_k$ also arises from the Isaacs snark $J_k$ by removing two vertices at  distance $4$. It is known that all the Isaacs superedges $A_k$ are proper $(3,3)$-poles \cite{Superposition, Morphology}.

We will need the following two properties of multipoles constructed from Isaacs snarks.

\begin{lemma}
    \label{lemma:perfect23}
    Let $J_{k}$ be the Isaacs snark, where $k \ge 5$ is odd, and let
    $u$ be a vertex and $vw$ be an edge of $J_{k}$ such that
    $u$ is adjacent to neither $v$ nor $w$.
    Then the proper $(2,3)$-pole $(J_{k} - vw) - u$ is perfect.
\end{lemma}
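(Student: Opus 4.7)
My plan is to enumerate the admissible colour patterns on the five semiedges of $T = (J_k - vw) - u$ and then realise each of them by an explicit $3$-edge-colouring of $T$. Write $B = (b_1, b_2)$ for the $2$-connector produced by cutting $vw$ and $C = (c_1, c_2, c_3)$ for the $3$-connector at the removed vertex $u$. The Parity Lemma forces each colour of $\K$ to appear an odd number of times among the five semiedges, so the colour distribution is of type $(3,1,1)$ up to permutation of $\K$. The condition $\phisum(B) \ne 0$ says $\phi(b_1) \ne \phi(b_2)$, and a pigeonhole check under a $(3,1,1)$ distribution shows that $C$ then cannot be a rainbow, so $\phisum(C) \ne 0$ is automatic. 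After using the $S_3$-action on $\K$ to fix $\phi(b_1) = \alpha$ and $\phi(b_2) = \beta$, the multiset of colours on $C$ reduces to one of $\{\alpha,\alpha,\gamma\}$, $\{\beta,\beta,\gamma\}$ or $\{\gamma,\gamma,\gamma\}$; the first two are identified by the transposition $\alpha \leftrightarrow \beta$, leaving a small fixed list of ordered patterns to realise.

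To produce colourings realising each admissible pattern, I would exploit the criticality of $J_k$. Since $v$ and $w$ are the endpoints of the edge $vw$, by Lemma~\ref{lemma:isochromatic} the $4$-pole $J_k - [v, w]$ admits a colouring in which all four semiedges carry a common colour. Reintroducing $v$ and $w$ in a controlled way (assigning colours to the six reintroduced edge-ends so as to satisfy the $3$-edge-colouring condition at $v$ and $w$) produces a colouring of $J_k - vw$ as a $4$-pole with a pre-chosen pair of colours at $(b_1, b_2)$; further removing any vertex $u$ not adjacent to $v$ or $w$ yields a colouring of $T$. Each admissible pattern on $C$ is then reached either directly or by a Kempe-chain swap internal to $T$ that does not disturb the desired colours on $B$. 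Note that this approach is consistent with the fact that the $3$-pole $J_k - u$ itself is uncolourable (any colouring of it would extend to a $3$-edge-colouring of the snark $J_k$), which is in turn the reason why the properness of $T$ forces $\phi(b_1)\ne\phi(b_2)$ in every colouring.

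The main obstacle is the case-by-case verification that each admissible pattern on $C$ really is attained. I expect this to be tamed by exploiting the cyclic structure of the Isaacs snark $J_k$: since $J_k$ arises from $J_5$ by substituting the $6$-pole $Y_{k-3}$ for the $6$-pole $Y_2$ used in the construction of $A_5$, and $Y_m$ has a sufficiently flexible colouring set for odd $m\ge 2$, it should suffice to verify the required realisations in a small base case inside $J_5$ (handling the finite number of cases corresponding to the orbits of $\mathrm{Aut}(J_5)$ on pairs $(u, vw)$ subject to the non-adjacency constraint) and then transfer the conclusion to general $J_k$ by an argument analogous to the one used to establish propriety of the Isaacs superedges $A_k$ in \cite{Morphology, Superposition}.
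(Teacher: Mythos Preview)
Your enumeration in the first paragraph is correct, but the realisation strategy in the second paragraph does not work. Lemma~\ref{lemma:isochromatic} gives $\col(J_k-[v,w])=\{(a,a,b,b)\mid a,b\in\K\}$, where the first $2$-connector carries the two semiedges that were incident with $v$. Thus in \emph{every} colouring of $J_k-[v,w]$ the two non-$w$ edges at $v$ receive the same colour. If you now ``reintroduce'' $v$, those two equally coloured edges together with the dangling edge $b_1$ all meet at $v$, and no proper colouring exists there. The isochromatic route therefore produces no colouring of $J_k-vw$ at all (which, incidentally, is a $2$-pole, not a $4$-pole), and the Kempe-chain step has nothing to act on. Even granting some initial colouring of $T$, the assertion that a Kempe swap can adjust $C$ without disturbing $B$ is unsupported: a Kempe chain starting at a semiedge of $C$ may terminate at a semiedge of $B$.

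Your third paragraph is the workable idea, and it is exactly what the paper does---but as the whole argument, not a fallback. The paper verifies the statement for $J_5$ \emph{and} $J_7$ by computer and then inducts: for odd $k\ge 7$ the $(2,3)$-pole $(J_{k+2}-e)-u$ contains a copy of the $6$-pole $Y_4$, which is colour-equivalent to $Y_2$, so the multipole is colour-equivalent to one built from $J_k$. You mention only $J_5$ as a base case, but $J_7$ is genuinely needed: for some placements of $u$ and $vw$ in $J_7$ no copy of $Y_4$ avoids all of $u,v,w$, so the reduction to $J_5$ cannot be carried out uniformly.
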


\begin{proof}
We proceed by induction. The statement holds for $J_5$ and $J_7$, which we have verified by using a computer. Assume that the statement is true for some odd $k \ge 7$. Then the proper $(2,3)$-pole $T = (J_{k+2} - e) - v$ contains a $6$-pole $Y_4$ which is colour-equivalent to the $6$-pole $Y_2$. Therefore, the proper $(2,3)$-pole $T$ is colour-equivalent to some proper $(2,3)$-pole constructed from $J_k$, which is perfect by the induction hypothesis. 
\end{proof}

As a straightforward consequence, we have the following lemma.

\begin{lemma}
    \label{lemma:isaacs-33-pole}
    Let $M = J_k - (u, v)$ be a $(3,3)$-pole constructed from an Isaacs snark $J_k$ for some odd $k \ge 5$ by removing vertices $u$ and $v$ whose distance is at least $3$ and let $a, b, c, d, e$ be any colours from $\K$ such that $a + b = c + d + e \ne 0$. Then
    $$(a, b_2, b_3, c, d, e) \in \col(M)$$
    for some $b_2, b_3 \in \K$ with $b_2 + b_3 = b$.
\end{lemma}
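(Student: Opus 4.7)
The plan is to deduce the lemma from Lemma~\ref{lemma:perfect23} by realising $M = J_k - (u,v)$ as the result of deleting the vertex $v$ from a perfect $(2,3)$-pole. For an edge $vw$ of $J_k$ incident with $v$ I will work with $N = (J_k - vw) - u$; since $\mathrm{dist}(u,v) \ge 3$, the vertex $u$ is adjacent to neither $v$ nor $w$, and hence by Lemma~\ref{lemma:perfect23} the $(2,3)$-pole $N$ is perfect.

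Next I will record how a colouring of $N$ projects to a colouring of $M$. The $3$-connector at $u$ is common to $N$ and $M$. The $3$-connector at $v$ in $M$ consists of the $w$-side semiedge of $vw$ (keeping its colour $f_2$ from $N$) together with the two links $vv'$ and $vv''$ at $v$ in $N$, which become dangling edges in $M$ retaining their $N$-colours. Properness at $v$ in $N$ forces $\phi(vv')$, $\phi(vv'')$, and the $v$-side semiedge colour $f_1$ of $vw$ to be pairwise distinct; in particular $f_1 = \phi(vv') + \phi(vv'')$.

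To realise the target $6$-tuple $(a,b_2,b_3,c,d,e)$, I will choose $vw$ so that its $w$-side semiedge sits in the position of $M$'s $3$-connector at $v$ assigned colour $e$, and pick a pair $(b_2,b_3) \in \K^2$ with $b_2+b_3 = b$ (always possible: take $b_2 = b_3$ if $b = 0$, and otherwise take any two distinct non-zero colours summing to $b$). I will then apply the perfectness of $N$ to the $5$-tuple $(f_1,f_2,a,b_2,b_3) = (c+d,\,e,\,a,\,b_2,\,b_3)$. The flow through each connector equals $a+b \ne 0$ by hypothesis, and the parity lemma is satisfied because $(a+b) + (a+b) = 0$; hence such a colouring of $N$ exists and projects to the desired colouring of $M$.

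The hard part will be guaranteeing the technical requirement $f_1 = c+d \ne 0$, i.e.\ $c \ne d$, so that $f_1$ is a legitimate colour in $\K$. If the chosen $vw$ yields $c = d$, I will reselect $vw$ to be one of the other two edges at $v$ so that the two target colours placed on the link positions differ; this is always possible whenever $c,d,e$ are not all equal. A secondary subtlety is that perfectness pins down only the semiedge $5$-tuple of $N$ and not the internal assignment of $c$ versus $d$ to the two link positions; I would resolve this either by exploiting the symmetry of $J_k$ that interchanges the remaining two neighbours of $v$, or by combining the outcomes of the three possible choices of $vw$ at $v$. The degenerate case $c = d = e$, which forces $a+b = c$, cannot be reached by this construction and would require a separate direct argument using the freedom in $(b_2,b_3)$ together with the internal structure of $J_k$.
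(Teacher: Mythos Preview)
Your overall plan—reduce to Lemma~\ref{lemma:perfect23} via a perfect $(2,3)$-pole—is the right one, but you cut on the wrong side, and this is exactly what generates the three loose ends you list at the end.

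The statement is asymmetric: the colours $c,d,e$ on the $v$-connector are prescribed \emph{in order}, while on the $u$-connector only $a$ and the sum $b_2+b_3=b$ are fixed. All the slack lives at $u$. By severing an edge at $v$ and removing $u$, you place the $2$-connector on the rigid side. Perfectness of $N=(J_k-vw)-u$ then pins down only the five semiedge colours; on the two links $vv',vv''$ you obtain merely $\{\varphi(vv'),\varphi(vv'')\}=\K\setminus\{f_1\}$ with no control over which is which, and since these two colours are forced to be \emph{distinct} at the vertex $v$, the construction is structurally incapable of putting equal colours on two of the three $v$-positions. Reselecting $vw$ shifts which slot is ``free'' but never resolves the ordering, and when $c=d=e$ (a legitimate case: then $c+d+e=c\ne 0$) no choice of $vw$ can work at all. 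Your proposed rescues do not close these gaps: there is no reason to expect $J_k$ to carry an automorphism fixing both $u$ and $v$ while swapping two neighbours of $v$, and ``combining the three choices of $vw$'' still only ever delivers the unordered pair on the link positions.

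The remedy is to swap the roles of $u$ and $v$: sever an edge $i_1$ incident with $u$ and remove $v$. Since $\mathrm{dist}(u,v)\ge 3$, Lemma~\ref{lemma:perfect23} applies to $N=(J_k-i_1)-v$, and perfectness gives a colouring with $2$-connector $(a,b)$ and $3$-connector exactly $(c,d,e)$ in the required order—no case analysis, no degenerate subcase. Deleting $u$ (together with the dangling edge carrying colour $b$) then yields $M$; the vertex condition at $u$ forces the two remaining edges $i_2,i_3$ to have colours $b_2,b_3$ with $b_2+b_3=b$. This is precisely the paper's argument.
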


\begin{proof}
    Let $i_1, i_2, i_3$ be the links incident with $u$ in $J_k$ and $o_1, o_2, o_3$ be the links incident with $v$. Since the proper $(2,3)$-pole $N((i_1, j_2), O) = J_k - i_1 - v$ is perfect according to Lemma \ref{lemma:perfect23}, it admits a colouring $\varphi$ with $\varphi(S(N)) = (a, b, c, d, e)$. Then $\varphi(i_2) = b_2$ and $\varphi(i_3) = b_3$. After removing the vertex $u$ together with the dangling edge $j_2$ we obtain the $(3,3)$-pole $M$ with the desired colouring.
\end{proof}

The key idea of our superposition is to choose a cycle of the base graph and replace its vertices and edges with the just introduced supervertices and superedges thereby producing a larger multipole called a supercycle.
First, we recursively define a~\emph{$k$-superpath} as follows: A $1$-superpath is any $(3,3,1)$-pole of the form ${F_1} \circ W \circ {F_2}$, where $F_1$ and $F_2$ are arbitrary Isaacs superedges. For $k\ge 2$, a $k$-superpath is a $(3,3,k)$-pole of the form $SP_{k-1} \circ W \circ {F_{k+1}}$ for an arbitrary Isaacs superedge $F_{k+1}$ and an arbitrary $(k-1$)-superpath $SP_{k-1}$. For $k \ge 2$, a \emph{$k$-supercycle} $SC_k$ is a $k$-pole of the form $\overline{SP_{k-1} \circ W}$, where $SP_{k-1}$ is an arbitrary $(k - 1)$-superpath.

Supercycles and superpaths have colouring properties similar to those of cycles and paths, respectively. At this point it may be convenient to regard paths and cycles as multipoles with a dangling edge attached to each vertex. To be more precise, for $k \ge 1$, the \emph{$k$-cycle} is the $k$-pole $C_k(e_1, e_2, \dots, e_k)$ consisting of a cycle $(v_1v_2\dots v_k)$ with a dangling edge $e_i$ attached to $v_i$ for each $i\in\{1, \dots, k\}$. 
For $k \ge 1$, the \emph{$k$-path} is the $(1,1,k)$-pole $P_k((i), (o), (r_1, r_2, \dots, r_{k}))$ whose underlying graph is a path $v_1\dots v_k$, the dangling edges $i$ and $o$  are incident with vertices $v_1$ and $v_k$, respectively, and the dangling edge $r_i$ is incident with $v_i$ for each $i\in\{1, \dots, k\}$. Note that the $k$-path contains $k$ vertices and $k - 1$ links.

\begin{lemma}
	\label{lemma:superpath}
    Let $k \ge 1$ and let $SP_k$ be an arbitrary $k$-superpath. Consider an arbitrary element
    $$
        (c_i, c_o, c_1, \dots, c_k)\in \col(P_{k})
    $$
    for a $k$-path $P_k$ and arbitrary elements $a_1$, $a_2$, $a_3$, $b_1$, $b_2$, $b_3\in \K$ such that $a_1 + a_2 + a_3 = c_i$ and $b_1 + b_2 + b_3 = c_o$. Then 
    $$
        (a_1, a_2, a_3, b_1, b_2, b_3, c_1, \dots, c_k)\in \col(SP_k).
    $$
\end{lemma}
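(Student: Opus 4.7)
My plan is to prove the lemma by induction on $k$, building the colouring of $SP_k$ by combining the inductive hypothesis for $SP_{k-1}$ with colourings of the newly attached supervertex $W_k$ and Isaacs superedge $F_{k+1}$.

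For the base case $k=1$, the multipole $SP_1 = F_1\circ W\circ F_2$ is small enough to handle directly. A colouring $(c_i,c_o,c_1)\in \col(P_1)$ consists of three pairwise distinct nonzero elements of $\K$ summing to zero. Given arbitrary admissible splits $(a_1,a_2,a_3)$ of $c_i$ and $(b_1,b_2,b_3)$ of $c_o$, I would produce a colouring of $SP_1$ by first choosing triples for the two internal cross-sections (between $F_1$ and $W$, and between $W$ and $F_2$) so that the supervertex $W$ admits a colouring with these triples on its $3$-connectors and $c_1$ on its $1$-connector, and then extending through $F_1$ and $F_2$ via Lemma~\ref{lemma:isaacs-33-pole}. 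The freedom granted by that lemma in selecting one input coordinate of each Isaacs superedge while pinning the full triple on the other side makes this synchronisation possible.

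For the inductive step ($k\ge 2$), write $SP_k = SP_{k-1} \circ W_k \circ F_{k+1}$ and let $c^*\in\K$ denote the colour of the edge $v_{k-1}v_k$ in the given colouring of $P_k$. Then $(c_i,c^*,c_1,\ldots,c_{k-1})\in\col(P_{k-1})$ and $c^*+c_o+c_k=0$ at the vertex $v_k$. I would proceed in three steps: \emph{(i)}~apply Lemma~\ref{lemma:isaacs-33-pole} to $F_{k+1}$, prescribing the output as $(b_1,b_2,b_3)$ and choosing a convenient first input coordinate, to obtain a colouring of $F_{k+1}$ with some input triple $(q_1,q_2,q_3)$ summing to $c_o$; \emph{(ii)}~extend this to a colouring of $W_k$ whose output equals $(q_1,q_2,q_3)$, whose residual equals $c_k$, and whose input is some triple $(p_1,p_2,p_3)$ summing to $c^*$; \emph{(iii)}~apply the inductive hypothesis to $SP_{k-1}$ with input $(a_1,a_2,a_3)$, output $(p_1,p_2,p_3)$, and residuals $(c_1,\ldots,c_{k-1})$. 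Splicing the three colourings together produces the required colouring of $SP_k$.

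The main obstacle is step~\emph{(ii)}: for the output triple $(q_1,q_2,q_3)$ produced in step~\emph{(i)}, one must verify that the colouring set of the supervertex $W$ contains a colouring with this output, residual $c_k$, and \emph{some} input triple summing to $c^*$. The sum constraint $p_1+p_2+p_3=c^*$ is automatic by the Parity Lemma once the output and residual are fixed, so the remaining question reduces to a direct inspection of the colouring set of the small multipole $W$ depicted in Figure~\ref{fig:supervertex}. The freedom in choosing the first input coordinate of $F_{k+1}$ in step~\emph{(i)} supplies the flexibility needed to match $(q_1,q_2,q_3)$ with an entry of $\col(W)$ whenever the raw choice fails; this coordination between the two lemmas is the technical heart of the proof.
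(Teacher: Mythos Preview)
Your inductive step is correct and is essentially the mirror image of the paper's argument: you peel off $W_k\circ F_{k+1}$ from the right end of $SP_k$, whereas the paper peels off $F_1\circ W_1$ from the left. In both cases one chooses the free coordinate granted by Lemma~\ref{lemma:isaacs-33-pole} so as to avoid the residual colour $c_k$ (respectively $c_1$), the supervertex $W$ then forces the remaining cross-section, and the induction hypothesis absorbs whatever triple appears there.

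The gap is in your treatment of the base case $k=1$. Lemma~\ref{lemma:isaacs-33-pole} lets you pin a full triple on \emph{one} connector of an Isaacs superedge while controlling only the first coordinate (and the sum) on the other; it does \emph{not} give full control on both connectors simultaneously. For $SP_1=F_1\circ W\circ F_2$ the two outer connectors already carry the prescribed triples $(a_1,a_2,a_3)$ and $(b_1,b_2,b_3)$, so the lemma leaves you with only partial control on both inner cross-sections. But the supervertex $W$ demands that the second and third inner semiedges of $F_1$ equal those of $F_2$ \emph{individually}, not merely in sum, and one free coordinate per side cannot force that match. In the inductive step this issue disappears because the induction hypothesis on $SP_{k-1}$ accepts any output triple with the correct sum; at $k=1$ there is no such absorber on either end.

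The paper handles this by a computer check of $\col(A_5\circ W\circ A_5)$ and then reduces arbitrary $F_1\circ W\circ F_2$ to this case via the colour-equivalence $\col(Y_{\ell-3})=\col(Y_2)$. A non-computational argument would require something stronger than Lemma~\ref{lemma:isaacs-33-pole}---for instance, that the uncontrolled pair $(b_2,b_3)$ can be realised in either order---which is not established in the paper.
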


\begin{proof}
    We proceed by induction on $k$.
    For $k = 1$, we have verified that the $(3,3,1)$-pole $A_5 \circ W \circ A_5$ has the desired colouring set by using a computer. Every other superedge $A_\ell$ can be obtained from $A_5$ by substituting the $(3,3)$-pole $Y_{\ell - 3}$ for  the only copy of $Y_2$ contained in $A_5$. Since $\col(Y_{\ell - 3})=\col(Y_2)$, we have $\col(SP_1) = \col(A_x \circ W \circ A_y)$ for any odd $x, y \ge 5$.
	

    Now, assume that the statement holds for any $(k - 1)$-superpath. Consider a $k$-superpath $SP_k = {F_1} \circ W_1 \circ {F_2} \circ \dots \circ W_k \circ F_{k+1}$ for some $k \ge 2$.
    Let $(c_i, c_o, c_1, \dots, c_k) \in \col(P_k)$. Consider arbitrary elements $a_1,\, a_2,\, a_3,\, b_1,\, b_2,\, b_3 \in \K$ such that $a_1 + a_2 + a_3 = c_i$ and $b_1 + b_2 + b_3 = c_o$.
    Choose $d \in \K - \{c_1\}$.
    By Lemma \ref{lemma:isaacs-33-pole}, we have $(a_1, a_2, a_3, d, d_1, d_2) \in \col(F_{1})$ for some $d_1, d_2 \in \K$ satisfying Parity Lemma. Then clearly, $(d, d_1, d_2, d + c_1, d_1, d_2, c_1) \in \col(W_1)$.
    Finally, by the induction hypothesis, we have $(d + c_1, d_1, d_2, b_1, b_2, b_3, c_2, c_3, \dots, c_k) \in \col({F_2} \circ W_2 \circ {F_3} \circ \dots \circ W_k \circ F_{k+1})$ which yields the desired colouring.
\end{proof}

\begin{lemma}
	\label{lemma:supercycle}
If $SC_k$ is an arbitrary supercycle with $k \ge 2$, then $\col(SC_k) = \col(C_k)$.
\end{lemma}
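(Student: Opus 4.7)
For the inclusion $\col(SC_k) \subseteq \col(C_k)$, I start from a colouring $\phi$ of $SC_k$ and denote by $d_i$ the residual colour of the supervertex $W_i$. Each Isaacs superedge $F_i$ is a proper $(3,3)$-pole, so both of its $3$-connectors carry the same nonzero flow sum $c_i \in \K$. The Parity Lemma applied to $W_i$ gives $c_{i-1} + c_i + d_i = 0$, which is exactly the cycle parity at the $i$-th vertex. Hence $(d_1, \dots, d_k) \in \col(C_k)$, settling the easy direction.

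For the reverse inclusion, I fix $(d_1, \dots, d_k) \in \col(C_k)$ with accompanying ring colours $c_1, \dots, c_k \in \K$ satisfying $c_{i-1} + c_i + d_i = 0$ (indices mod $k$). Writing $SC_k = \overline{SP_{k-1} \circ W_k}$, the task is to colour $SP_{k-1} \circ W_k$ so that the two $3$-connectors identified by the closure---namely $I$ of $SP_{k-1}$ and $S_2$ of $W_k$---carry the same ordered sequence. First, $(c_k, c_{k-1}, d_1, \dots, d_{k-1}) \in \col(P_{k-1})$: this is witnessed by assigning $c_1, \dots, c_{k-2}$ to the internal links of $P_{k-1}$, since the cycle parity becomes the path parity at every vertex. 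Next, inspection of $W$ (as already used in the inductive step of Lemma~\ref{lemma:superpath}) shows that $W$ admits colourings of the pattern $S_1 = (x_1, x_2, x_3)$, $S_2 = (x_1 + z, x_2, x_3)$, residual~$z$, whenever all these values lie in $\K$. I therefore pick any $(a_1, a_2, a_3) \in \K^3$ summing to $c_k$ with $a_1 \ne d_k$ and set $(b_1, b_2, b_3) := (a_1 + d_k,\, a_2,\, a_3)$; then $b_1 + b_2 + b_3 = c_k + d_k = c_{k-1}$ and the entries are nonzero. Lemma~\ref{lemma:superpath} then yields a colouring of $SP_{k-1}$ with $I = (a_1, a_2, a_3)$, $O = (b_1, b_2, b_3)$, and residuals $(d_1, \dots, d_{k-1})$, while $W_k$ is coloured with $S_1 = (b_1, b_2, b_3)$, $S_2 = (a_1, a_2, a_3)$, and residual $d_k$. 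Composing these colourings via the serial junction and identifying $I$ with $S_2$ through the closure gives a colouring of $SC_k$ whose residual colours are exactly $(d_1, \dots, d_k)$.

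The main obstacle is coordinating the ordered sequences on the two $3$-connectors joined by the closure. The rigid structure of $W$---namely, that a colouring with sequence $(x_1, x_2, x_3)$ on $S_1$ forces $S_2 = (x_1 + z, x_2, x_3)$ where $z$ is the residual---is precisely the matching condition that allows the sequences on $I$ and $O$ of $SP_{k-1}$ to be chosen compatibly with the closure. Once this observation is in place, Lemma~\ref{lemma:superpath} does the rest of the work.
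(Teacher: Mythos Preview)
Your proof is correct and follows essentially the same approach as the paper: both directions rely on the properness of the Isaacs superedges for the inclusion $\col(SC_k)\subseteq\col(C_k)$, and on decomposing $SC_k=\overline{SP_{k-1}\circ W_k}$ together with Lemma~\ref{lemma:superpath} for the reverse inclusion. The only cosmetic difference is that the paper first fixes a colouring of $W_k$ (specifying only the flow through each connector) and then invokes Lemma~\ref{lemma:superpath} to match it, whereas you first choose the triple $(a_1,a_2,a_3)$ and derive the colouring of $W_k$ from it; your version is more explicit about how the ordered $3$-tuples at the closure actually coincide, which is a point the paper leaves to the reader.
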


\begin{proof}
Because our superposition is proper, we conclude that $\col(SC_k) \subseteq \col(C_k)$. To prove the converse, let $SC_k = \overline{SP_{k-1} \circ W}$, and consider a $k$-cycle $C_k(e_1, e_2, \dots, e_k)$; denote by $v_i$ the vertex incident with $e_i$ for each $i \in \{1, 2, \dots, k\}$. Take a colouring $\varphi$ of $C_k$ and let $c_i = \varphi(e_i)$ for each $i \in \{1, 2, \dots, k\}$. The $(3,3,1)$-pole $W$ is colourable in such a way that the flows through its connectors are $a = \varphi(v_{k-1}v_k)$, $b = \varphi(v_kv_1)$ and $c_k$, respectively. Since $(b, a, c_1, c_2, \dots, c_{k-1}) \in \col(P_{k-1})$ according to Lemma~\ref{lemma:superpath}, the $(k - 1)$-superpath $SP_{k-1}$ admits desired colours on its semiedges, and therefore $\varphi(SC_k) \in \col(SC_k)$.
\end{proof}


In all our superpositions, we substitute a $k$-supercycle $SC_k$ for a copy $C$ of the $k$-cycle $C_k$ in a snark $G$. The resulting snark will be  denoted by $\superp(G, C, SC_k)$.

\bigskip


\begin{theorem}
\label{thm:superposition}
If $G$ is a critical snark, then the graph $\tilde G=\superp(G, C, SC_k)$ is also a critical snark.
\end{theorem}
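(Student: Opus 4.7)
My plan is to verify criticality of $\tilde G$ by producing, for every pair $\{x,y\}$ of adjacent vertices of $\tilde G$, a $3$-edge-colouring of $\tilde G-[x,y]$. First, $\tilde G$ is a snark because our superposition is proper: $G$ is uncolourable, every Isaacs superedge $A_\ell$ is a proper $(3,3)$-pole, and the small supervertex $W$ is easily verified to be proper, so the proper-superposition theorem \cite[Theorem~4]{Superposition} applies. For notation, label the vertices of $C$ as $v_1,\dots,v_k$ in cyclic order, write $W_1,\dots,W_k$ and $F_1,\dots,F_k$ for the supervertices and Isaacs superedges of $SC_k$ (with $F_i$ joining $W_{i-1}$ to $W_i$), and let $y_i$ denote the unique vertex of $W_i$ incident with its $1$-semiedge. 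The only edges of $\tilde G$ joining $V(G)\setminus V(C)$ to $V(SC_k)$ are then the $k$ crossing edges $x_iy_i$, where $x_i$ is the unique neighbour of $v_i$ outside $C$ in $G$.

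I would split according to the location of the pair $\{x,y\}$. Case A: both $x,y\in V(G)\setminus V(C)$. Then $xy\in E(G)$, and criticality of $G$ yields a colouring $\varphi$ of $G-[x,y]$; its restriction to the $k$ crossing edges lies in $\col(C_k)$, so by Lemma~\ref{lemma:supercycle} it is also realised by some colouring of $SC_k$, and gluing along the matched semiedges produces the required colouring of $\tilde G-[x,y]$. Case B: exactly one of $x,y$ lies in $V(SC_k)$, say $y=y_i$ with $xv_i\in E(G)$. Applying criticality of $G$ to $\{x,v_i\}$ produces a colouring $\psi$ of $G-[x,v_i]$ which prescribes colours on the $k-1$ surviving crossing edges. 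To finish this case I need a $y_i$-punctured analogue of Lemma~\ref{lemma:supercycle} asserting that $SC_k-y_i$ realises on its $k-1$ external semiedges any colouring that $C_k-v_i$ realises on the corresponding semiedges; this can be obtained by rerunning the inductive scheme of Lemma~\ref{lemma:superpath} once the colouring set of the small fixed multipole $W-y_i$ has been tabulated directly.

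Case C: both $x,y\in V(SC_k)$, which is the hardest case. I would choose adjacent cycle vertices $v_i,v_{i+1}$ so that the local block $W_i\cup F_{i+1}\cup W_{i+1}$ of $SC_k$ contains $\{x,y\}$ (if instead $\{x,y\}\subset V(F_j)$ I would pick the pair $\{v_{j-1},v_j\}$) and apply criticality of $G$ to this pair. By Lemma~\ref{lemma:isochromatic} the resulting colouring $\chi$ of $G-[v_i,v_{i+1}]$ is isochromatic on its four dangling edges and assigns specific colours to the remaining $k-2$ crossing edges; Lemma~\ref{lemma:isaacs-33-pole} then lets me colour each intact Isaacs superedge $F_j$ consistently with these prescribed boundary values. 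What remains is a sub-case analysis in which, depending on whether $x,y$ lie inside one superedge, inside one supervertex, in two adjacent supervertices, or across a supervertex-superedge junction, I must colour the modified local block so that its boundary semiedges agree with the already-coloured parts. The main obstacle is precisely this sub-case analysis: it requires enough colouring freedom after deleting an adjacent pair from either a supervertex $W$ (verifiable directly because $W$ is a small fixed multipole) or from an Isaacs superedge $A_\ell$ (which requires a strengthening of Lemma~\ref{lemma:isaacs-33-pole} exploiting the criticality of the Isaacs snark $J_\ell$).
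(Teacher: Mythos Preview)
Your overall strategy---split by the location of $\{x,y\}$ and reduce to criticality of $G$ together with the colouring flexibility of Isaacs superedges---is exactly the paper's strategy, and your Cases~A and~B match the paper's Cases~(i) and~(ii) essentially verbatim.

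The gap is in Case~C, and it stems from a structural misunderstanding of the supervertex $W$. The multipole $W$ has \emph{exactly one} vertex (the paper states this explicitly: ``$w_j$ is the only vertex of $W_j$''); its two $3$-connectors are linked by two isolated pass-through edges together with the single vertex $w_j$. Consequently your sub-cases ``inside one supervertex'' and ``in two adjacent supervertices'' are vacuous, and your remark about ``deleting an adjacent pair from a supervertex $W$'' does not apply. More importantly, because of the pass-through edges in $W$, a vertex $x\in V(F_j)$ can be adjacent to a vertex $y\in V(F_{j+1})$ without any intervening supervertex vertex. This situation---the paper's Case~(v)---does not fit into your scheme: neither $\{x,y\}\subset V(F_j)$ nor $\{x,y\}\subset W_i\cup F_{i+1}\cup W_{i+1}$ holds for any~$i$, since $x$ and $y$ live in two distinct superedges. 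The paper handles this case separately and it is arguably the most delicate one, requiring a careful choice of colouring of $J_{\ell_j}-(u_j,x)$ via Lemma~\ref{lemma:isaacs-33-pole} and of $J_{\ell_{j+1}}-[u_{j+1},y]$ via Lemma~\ref{lemma:isochromatic}, with an additional colour-swap to avoid a conflict at the $1$-connector of $W_j$.

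A minor side remark: properness of $W$ is neither needed nor true in general (the pass-through edges can make a $3$-connector sum to zero); only the superedges must be proper for \cite[Theorem~4]{Superposition} to apply.
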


\begin{proof}
We start the proof by introducing some useful notation. Let $G = M*C$ be a critical snark containing a $k$-cycle $C(e_1, e_2, \dots, e_k)$; for each $i\in \{1, 2, \dots k\}$ let $v_i$ denote the vertex incident with the dangling edge $e_i$. Let $SC_k$ be a $k$-supercycle such that
$$
SC_k(r_1, r_2, \dots, r_k) = \overline{F_1 \circ W_1 \circ F_2 \circ \dots \circ W_{k-1} \circ F_k \circ W_k}
$$
where, for each $j\in \{1, 2, \dots k\}$, $W_j$ is a copy of the supervertex $W$, $w_j$ is the only vertex of $W_j$, and
$$
{F_j}((i_j^1, i_j^2, i_j^3), (o_j^1, o_j^2, o_j^3)) = J_{\ell_j} - (u_j, t_j)
$$
for some odd $\ell_j \ge 5$. The index $j$ in all the places where we use it is taken modulo $k$. The notation for semiedges $i_j^1,
\dots, o^3_j$ is also used for their original edges (that is, the links of $J_{\ell_j}$ from which the semiedges arise).

    \begin{figure}[h]
		\centering
		\includegraphics[]{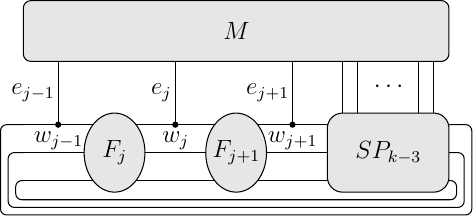}
		\caption{Structure of the snark $\tilde G$ used throughout the proof}
		\label{fig:superposition-0}
	\end{figure}

Let $\tilde G = M*SC_k = \superp(G, C, SC_k)$. A schematic drawing of $\tilde G$ is given in Figure~\ref{fig:superposition-0}, where two consecutive superedges $F_j$ and $F_{j+1}$ from the supercycle $SC_k$ are depicted separately. The remaining $(k - 3)$-superpath $F_{j+2} \circ W_{j+2} \circ F_{j+3} \circ \dots \circ W_{j-2} \circ F_{j-1}$  is denoted by $SP_{k-3}$ (note that $F_{j-1} = F_{j+2+k-3}$).

Since $\tilde G$ arises from $G$ by a proper superposition, $\tilde G$ is a snark. It remains to show that $\tilde G$ is critical.
Let $x$ and $y$ be arbitrary adjacent vertices in the snark $\tilde G$. We need to show that the $4$-pole $\tilde G - [x, y]$ is colourable. The vertices $x$ and $y$ might come from the $k$-pole~$M$, from a superedge contained in $SC_k$, or from a supervertex contained in $SC_k$. Accordingly, the proof splits into several cases.
	
\paragraph{Case (i).} Both $x$ and $y$ belong to the $k$-pole $M$. Since $G$ is critical, the $4$-pole $M*C - [x, y]$ is colourable. The colourability of $M*SC_k - [x, y]$ then follows from the fact that $C$ and $SC_k$ are colour-equivalent according to Lemma~\ref{lemma:supercycle}.

\paragraph{Case (ii).} The vertex $x$ belongs to $M$ and $y = w_j$. for some $j$. By Lemma~\ref{lemma:isochromatic}, the $(2,2)$-pole $N((f_1, f_2), (f_3, f_4)) = G - [x, v_j]$ admits a colouring $\varphi$ such that $\varphi(f_1) = \varphi(f_2) = \varphi(f_3) = \varphi(f_4) = a$. For this colouring, we have $\varphi(v_{j-1}v_{j-2}) = p$, $\varphi(e_{j-1}) = p + a$, $\varphi(v_{j+1}v_{j+2}) = q$, and $\varphi(e_{j+1}) = q + a$ for some (not necessary distinct) elements $p,\, q \in \K-\{a\}$; note that $p$ and $q$ must be different from $a$, for otherwise the colour of some edge would be zero. We colour the $(k+1)$-pole $SC_k - y$ as shown in Figure~\ref{fig:superposition-1}; the colour of $r_i$ is set to $\varphi(e_i)$ for all $i \ne j$. From Lemmas \ref{lemma:isaacs-33-pole} and \ref{lemma:superpath} infer that the superedges ${F_j}$, ${F_{j + 1}}$, and the superpath $SP_{k-3}$ admit such colourings.
	
	\begin{figure}[h!]
		\centering
		\includegraphics{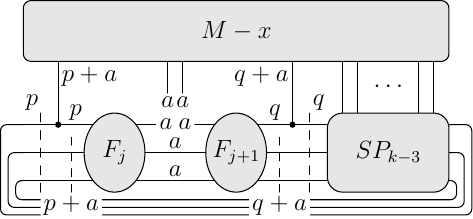}
		\caption{Colouring for Case (ii)}
		\label{fig:superposition-1}
	\end{figure}
	
\paragraph{Case (iii).} The vertex $x$ is identical with $w_j$, for some $j$, and $y$ belongs to an adjacent superedge, say ${F_j}$. Lemma~\ref{lemma:isochromatic} now implies that the $(2,2)$-pole $N_1 = J_{\ell_j} - [y, t_j]$ can be coloured in such a way that all its dangling edges
receive the same colour $q$. Hence, if we remove the vertex $u_j$ from the $(2,2)$-pole $N_1$ we get the $(3,2,2)$-pole $F_j'=(J_{\ell_j} - u_j) - [y, t_j]$ with its dangling edges coloured $(a,b,c,q,q,q,q)$, where $\{a, b, c\} = \K$.

Since $G$ is a critical snark, Lemma~\ref{lemma:isochromatic} implies that the $(2,2)$-pole $N((f_1, f_2), (f_3, f_4)) = G - [v_{j-1}, v_j]$ admits a colouring $\varphi$ such that $\varphi(f_1) = \varphi(f_2) = p \ne a$ and $\varphi(f_3) = \varphi(f_4) = q$ for some $p, q \in \K$, not necessary distinct. Observe that $\varphi(e_{j+1})=r\ne q$
because $e_{j+1}$ and the semiedge $f_4$ are both incident with $v_{j+1}$ in $N$.
The colouring of the $(2,2)$-pole $M - [x,y]$ is depicted in Figure~\ref{fig:superposition-3};  suitable colourings of $F_{j+1}$ and $SP_{k-2}$ exist according to Lemmas \ref{lemma:isaacs-33-pole} and \ref{lemma:superpath}, respectively.

	\begin{figure}[h!]
		\centering
		\includegraphics{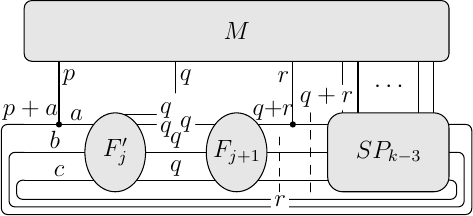}
		\caption{Colouring for Case (iii)}
		\label{fig:superposition-3}
	\end{figure}
	
\paragraph{Case (iv).} Both $x$ and $y$ belong to the same superedge ${F_j}$ for some $j$. From any colouring of the $(2,2)$-pole $J_{\ell_j} - [x, y]$ we can obtain a colouring of the $(3,3,2,2)$-pole $F_j' = (J_{\ell_j} - (u_j, t_j)) - [x, y]$ assigning to its first six dangling edges colours $a, b, c, p, q, r$ (in this order) such that $a + b + c = p + q + r = 0$. According to Lemma~\ref{lemma:isochromatic}, the $(2,2)$-pole $G - [v_j, v_{j+1}]$ admits a colouring $(a', a', p', p')$ for some $a' \ne a$ and $p' \ne p$, therefore we can colour the $(2,2)$-pole $\tilde G - [x,y]$ as shown in Figure~\ref{fig:superposition-4}. A colouring of $F_{j+1} \circ {W_{j+1} \circ SP_{k-3}}$, which is a $(k - 2)$-superpath, can be obtained from Lemma~\ref{lemma:superpath}.

	\begin{figure}[h!]
		\centering
		\includegraphics{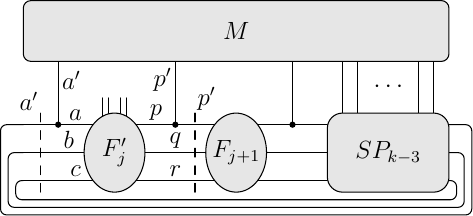}
		\caption{Colouring for Case (iv)}
		\label{fig:superposition-4}
	\end{figure}
	
\paragraph{Case (v).} The vertices $x$ and $y$ belong to two different superedges, say, $x \in V({F_j})$ and $y \in V({F_{j+1}})$. Set $F_j' = F_j-x$ (we discard the dangling edge adjacent to $x$ in $F_j'$) and similarly $F_{j+1}'=F_{j+1}-y$ (see Figure~\ref{fig:superposition-5}). By Lemma~\ref{lemma:isochromatic}, the exists a colouring of the $(2,2)$-pole $N_1 = G - [v_j, v_{j+1}]$ that assigns the same colour $a$ to all its semiedges. The link $e_{j-1}$ of $N_1$ receives a colour $b\ne a$, because it is incident with the dangling edge incident to $v_{j-1}$, which is coloured $a$. 

We proceed by finding suitable colourings for $F_j'$ and $F_{j+1}'$.  First, we colour $F_j'$. By the construction of the Isaacs superedge $F_j$, the vertices $u_j$ and $t_j$ are at distance at least~$4$ in $J_{\ell_j}$, so the distance between $u_j$ and $x$ is at least~$3$. Thus, according to Lemma~\ref{lemma:isaacs-33-pole}, the $(3,3)$-pole $N_2((i_j^1, i_j^2, i_j^3), (f_1, f_2, f_3)) = J_{\ell_j} - (u_j, x)$ admits a colouring $\varphi$ such that $\varphi(i^j_1) + \varphi(i^j_2) + \varphi(i^j_3) = a$, $\varphi(f_1) = a = \varphi(o_1^j) + \varphi(o_2^j)$ and $\varphi(f_2) + \varphi(f_3) = 0$. If we remove the vertex $t_j$ along with its dangling edge $f_1$ from $N_2$, we get two more dangling edges $o_j^1$ and $o_j^2$ which receive colours $b'$ and $c'$, respectively, with $b' + c' = a$. 
    
Next, we find a suitable colouring for $F_{j+1}'$. The $(2,2)$-pole $N_3((i_{j+1}^1, i_{j+1}^2), (g_3, g_4)) = J_{\ell_{j+1}} - [u_{j+1}, y]$ admits a colouring $\psi$ such that $\psi(i_{j+1}^1) = \psi(i_{j+1}^2) = c'$ and $\psi(g_3) = \psi(g_4)$. Under the colouring $\psi$, the links $o_{j+1}^1$, $o_{j+1}^2$ and $o_{j+1}^3$ receive pairwise distinct colours $p$, $q$, and $r$, respectively.
We may assume that $p \ne a$ because if we had $\psi(o_{j+1}^1) = a$, then we could swap the colours $a$, $b'$ in $\psi$ and get $\psi(o_{j+1}^1) \ne a$.

By using all the described colourings and Lemma~\ref{lemma:superpath}, the $(2,2)$-pole $\tilde G - [x, y]$ can be coloured as shown in Figure~\ref{fig:superposition-5}.  This completes Case~(v) as well as the entire proof.
\end{proof}
	
	\begin{figure}[h!]
		\centering
		\includegraphics{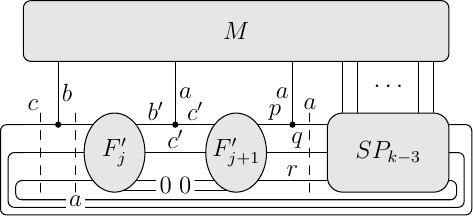}
		\caption{Colouring for Case (v)}
		\label{fig:superposition-5}
	\end{figure}

We can apply Theorem \ref{thm:superposition} to constructing cyclically $6$-connected critical snarks. For strictly critical snarks, however, we need the following lemma.

\begin{lemma}\label{lemma:removable}
Let $G$ be a snark and let $\tilde G$ be a snark obtained from $G$ by a proper superposition which replaces each vertex $v$ of $G$ with a supervertex $V_v$ and each edge of $G$ with a proper superedge $E_e$. If $\{u, v\}$ is a removable pair of vertices of $G$, then for each vertex $\tilde u$ from $V_u$ and each vertex $\tilde v$ from $V_v$ the pair $\{\tilde u, \tilde v\}$ forms a removable pair of vertices of~$\tilde G$.
\end{lemma}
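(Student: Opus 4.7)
My plan is to prove the contrapositive: assuming $\tilde G - \tilde u - \tilde v$ admits a $3$-edge-colouring $\tilde\varphi$, I will construct a $3$-edge-colouring $\varphi$ of $G - u - v$. This contradicts the removability of $\{u,v\}$ in $G$, which by the convention of the paper means precisely that $G - u - v$ is uncolourable.

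As a preparatory step, I would view $\tilde G - \tilde u - \tilde v$ as a cubic multipole in which each former neighbour of $\tilde u$ or $\tilde v$ carries a dangling edge for every edge it lost, and I would extend $\tilde\varphi$ to these dangling edges by assigning to them the colours missing at their incident vertex (in an arbitrary order if more than one is missing). This is well-defined and yields a genuine multipole colouring. Consequently every sub-multipole inherits a valid colouring, so in particular every superedge $E_e$ with $e \in E(G)$ is fully coloured as a $(3,3)$-pole. I would then define $\varphi(e) \in \K$ as the flow through either of the two $3$-connectors of $E_e$. The two flows coincide because the sum of all six semiedge colours of a cubic $(3,3)$-pole vanishes in $\K$ (a direct consequence of the parity lemma), and they are nonzero by properness of $E_e$. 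Hence $\varphi$ takes values in $\K \setminus \{0\}$.

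To see that the restriction of $\varphi$ to $E(G-u-v)$ is a proper $3$-edge-colouring, I would fix any $w \in V(G) \setminus \{u, v\}$ with edges $e_1, e_2, e_3$ incident to it in $G$. The supervertex $V_w$ is completely untouched by the removal of $\tilde u$ and $\tilde v$, so flow conservation inside $V_w$ (the parity lemma again, applied to a cubic multipole) gives $\varphi(e_1) + \varphi(e_2) + \varphi(e_3) = 0$ in $\K$. Since each summand is nonzero and the only triple of nonzero elements of $\K$ summing to zero consists of the three pairwise distinct nonzero elements, the values $\varphi(e_1), \varphi(e_2), \varphi(e_3)$ are pairwise different. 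Consequently the edges of $G - u - v$ incident to $w$ receive distinct colours, as required.

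The main technical point will be the preparatory extension: for superedges $E_e$ whose edge $e$ is incident to $u$ or $v$, the removal of $\tilde u$ or $\tilde v$ may destroy some of the junctions between $E_e$ and $V_u$ or $V_v$, so without the dangling-edge extension $E_e$ would not be fully coloured as a multipole and its properness could not be directly invoked. Once this step is in place, the rest of the argument reduces to purely local flow conservation at supervertices $V_w$ with $w \neq u, v$, all of which survive intact. The possible adjacency of $u$ and $v$ in $G$ plays no role: if $uv \in E(G)$, the superedge $E_{uv}$ is handled by the same extension but does not contribute to the colouring of $G - u - v$, since $uv \notin E(G - u - v)$.
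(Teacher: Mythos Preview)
Your proof is correct and follows essentially the same approach as the paper: both argue by contradiction, define $\varphi(e)$ as the (common, nonzero) flow through the connectors of $E_e$ using properness, and invoke the Parity Lemma at each intact supervertex $V_w$ with $w\neq u,v$ to conclude that $\varphi$ is a colouring of $G-(u,v)$. Your extra care with the dangling-edge extension is unnecessary in the paper's multipole framework (where removing a vertex already produces coloured dangling edges), and your reference to $E_e$ as a ``$(3,3)$-pole'' is a harmless slip since the argument works for any connector widths; otherwise the two proofs coincide.
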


\begin{proof}
Suppose to the contrary that the $6$-pole 
$\tilde G - (\tilde u, \tilde v)$ admits a colouring $\tilde\phi$. 
Since the superedge $E_e$ is proper for each edge $e \in E(G)$, both connectors of $E_e$ have the same nonzero total flow under $\tilde\phi$; we denote this common value by $\varphi(e)$. The assignment $e\mapsto\phi(e)$ defines a mapping $\phi\colon E(G) \rightarrow \K$.
For each vertex $w \in V - \{u, v\}$, the values of $\varphi$ on the edges around $w$ sum to zero due to Parity Lemma applied to $V_w$. The mapping $\varphi$ thus yields a $3$-edge-colouring of the $6$-pole $G - (u, v)$, contradicting the fact that $\{u, v\}$ was a removable pair of vertices of $G$.
\end{proof}

\begin{corollary}
\label{lemma:strictlycritical}
If $G$ is a strictly critical snark with a $k$-cycle $C$, then the snark $\tilde G=\superp(G, C, SC_k)$ is also strictly critical.
\end{corollary}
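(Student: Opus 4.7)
The plan is to derive the corollary almost immediately by combining Theorem~\ref{thm:superposition} with Lemma~\ref{lemma:removable}. Since every strictly critical snark is in particular critical, Theorem~\ref{thm:superposition} applied to $G$ yields that $\tilde G=\superp(G,C,SC_k)$ is critical. It therefore remains only to show that $\tilde G$ is not bicritical, for which it suffices to exhibit a single removable pair of vertices in $\tilde G$.

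I would obtain such a pair by lifting one from $G$. Since $G$ is strictly critical it is not bicritical, so there exist (necessarily non-adjacent) vertices $u,v\in V(G)$ such that $G-(u,v)$ is uncolourable; that is, $\{u,v\}$ is a removable pair of $G$. Before transporting this pair to $\tilde G$ by means of Lemma~\ref{lemma:removable}, I need to check that the construction fits the hypothesis of that lemma, i.e.\ that $\tilde G$ arises from $G$ by a proper superposition. In the supercycle substitution the vertices of $C$ are replaced by copies of the supervertex $W$ and the edges of $C$ by Isaacs superedges, which have already been noted to be proper $(3,3)$-poles; the vertices and edges of $G$ lying outside $C$ remain untouched and can be viewed as trivial, and trivially proper, supervertices and superedges. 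Thus every superedge used is proper and the superposition is proper in the sense of Lemma~\ref{lemma:removable}.

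Invoking that lemma with the removable pair $\{u,v\}$ of $G$, for any vertex $\tilde u$ in the supervertex replacing $u$ and any vertex $\tilde v$ in the supervertex replacing $v$ the pair $\{\tilde u,\tilde v\}$ is removable in $\tilde G$; if $u$ or $v$ lies outside $C$ the corresponding supervertex is a single vertex, so $\tilde u=u$ or $\tilde v=v$. This shows that $\tilde G$ is not bicritical, and combined with the criticality established in the first step we conclude that $\tilde G$ is strictly critical. There is no genuine obstacle in this argument: the two preceding results do all the work, and the only care required is to recognise that the supercycle construction qualifies as a proper superposition so that Lemma~\ref{lemma:removable} indeed applies.
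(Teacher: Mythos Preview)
Your proof is correct and follows essentially the same route as the paper: apply Theorem~\ref{thm:superposition} for criticality, then use a removable pair in $G$ together with Lemma~\ref{lemma:removable} to witness that $\tilde G$ is not bicritical. Your explicit check that the supercycle substitution is a proper superposition is a minor elaboration the paper leaves implicit, but otherwise the arguments coincide.
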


\begin{proof}
The snark $\tilde G = \superp(G, C, SC_k)$ is critical due to Theorem \ref{thm:superposition}. Since $G$ is not bicritical, it has a pair of non-adjacent vertices whose removal leaves an uncolourable graph. According to Lemma~\ref{lemma:removable}, this pair yields a pair of removable vertices also in $G'$, thus it cannot be bicritical.
\end{proof}



We are now ready to prove our main result, which implies Theorem~\ref{thm:main}.

\begin{theorem}\label{thm:sc-cc6}
There exists a cyclically $6$-connected strictly critical snark of order $n$ in each of the following cases.	
	\begin{enumerate}
	    \item[{\rm (i)}]   $n \equiv 0 \pmod{8}$ and $n \ge 320$.
	    \item[{\rm (ii)}]  $n \equiv 2 \pmod{8}$ and $n \ge 306$.
	    \item[{\rm (iii)}] $n \equiv 4 \pmod{8}$ and $n \ge 324$.
	    \item[{\rm (iv)}]  $n \equiv 6 \pmod{8}$ and $n \ge 342$.
	\end{enumerate}
\end{theorem}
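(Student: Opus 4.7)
The strategy is to invoke Corollary~\ref{lemma:strictlycritical} on a carefully chosen strictly critical base snark $G$ equipped with a short cycle $C$, and to let the Isaacs superedges $A_{\ell_1}, \ldots, A_{\ell_k}$ used to assemble the supercycle $SC_k$ vary in length. Theorem~\ref{thm:conn4} supplies strictly critical base snarks of every even order $\ge 32$, and Corollary~\ref{lemma:strictlycritical} then guarantees that $\tilde G = \superp(G, C, SC_k)$ inherits strict criticality. The two remaining tasks are: (a)~showing that $\tilde G$ is cyclically $6$-connected, and (b)~verifying that the attainable orders $|V(\tilde G)|$ cover each of the four residue classes modulo $8$ with the stated lower bounds.

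\paragraph{Order bookkeeping.} Replacing a $k$-cycle $C$ of $G$ by $SC_k$ contributes exactly $k(|V(W)|-1) + \sum_{j=1}^k (2\ell_j - 2)$ new vertices, since every $A_{\ell_j}$ arises from $J_{\ell_j}$ by deleting two vertices. Because incrementing a single $\ell_j$ by $2$ adds $4$ vertices, each pair $(G,C)$ yields an arithmetic progression of orders with common difference~$4$. To cover all four residue classes modulo~$8$, I would combine several such progressions by choosing base snarks $G$ of suitable residues modulo~$8$ (each guaranteed by Theorem~\ref{thm:conn4}) and, if required, by varying the cycle length $k$ as well. The sharp thresholds $320,\ 306,\ 324,\ 342$ should then fall out by identifying, for each residue class, the smallest base snark of matching residue that admits a short cycle in the ``generic'' position required by the connectivity argument.

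\paragraph{Cyclic $6$-connectivity -- the main obstacle.} A candidate cyclic edge cut of $\tilde G$ lies either inside $M = G - C$, inside a single superedge $F_j$, inside a single supervertex $W_j$, or else crosses some of the $3$-connectors linking these pieces. Since $G$ is cyclically $\ge 4$-connected with girth $\ge 5$, no short cyclic cut is inherited from $M$. Each $A_{\ell_j}$ arises from the cyclically well-connected Isaacs snark $J_{\ell_j}$ by removing two vertices at distance $\ge 4$, which rules out short cyclic cuts inside $A_{\ell_j}$. Cuts that traverse any $3$-connector must include all three of its edges, and the additional $1$-connector attached to each supervertex $W$ pushes the effective cut size one unit higher. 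Combining these bounds should force the cyclic connectivity of $\tilde G$ up to~$6$. A separate sanity check rules out short cycles at the boundaries between superedges, supervertices and $M$; this is routine given the internal girth of Isaacs snarks.

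\paragraph{Assembly.} Once cyclic $6$-connectivity has been established in the generic case, the four parts of the theorem follow by explicit parameter choices: fix a base snark of the right residue mod~$8$, take each $\ell_j = 5$ to minimise the order, and then increment the $\ell_j$'s to reach every larger order of the prescribed residue. I expect the residue class $n \equiv 6 \pmod 8$ to produce the largest threshold $342$ simply because the smallest strictly critical base snark of residue $6 \pmod 8$ that admits a usable short cycle is larger than in the other three cases. The cyclic $6$-connectivity verification is by far the most delicate technical step; everything else is a careful but routine count based on Corollary~\ref{lemma:strictlycritical} and the order formula above.
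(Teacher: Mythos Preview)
Your order bookkeeping contains a factor-of-two error that undermines the whole plan. The Isaacs snark $J_{\ell}$ has $4\ell$ vertices, so the superedge $A_{\ell}=J_{\ell}-(u,t)$ has $4\ell-2$ vertices, not $2\ell-2$. Consequently, incrementing a single $\ell_j$ by $2$ adds $8$ vertices, not $4$, and each pair $(G,C)$ with all $\ell_j=5$ yields $|V(\tilde G)|=|V(G)|+18k$ together with an arithmetic progression of common difference~$8$ above that. This is precisely why four separate base constructions are required, one for each residue class modulo~$8$; your scheme of covering two residues with a single $(G,C)$ cannot work. In the paper the four classes are hit not by varying the residue of the base snark but by varying the \emph{cycle length}: a $16$-cycle in a fixed order-$32$ strictly critical snark gives $32+18\cdot 16=320$, while $15$-, $16$- and $17$-cycles in a fixed order-$36$ strictly critical snark give $306$, $324$, $342$. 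Your explanation of why $342$ is the largest threshold (``smallest base snark of residue $6\pmod 8$'') is therefore off the mark; the reason is simply that a $17$-cycle is the longest needed.

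Your cyclic $6$-connectivity argument is also a genuine gap rather than a routine verification. The base snarks in question have cyclic connectivity only $4$ or $5$, so the superposition must \emph{raise} connectivity, and this depends delicately on how the chosen cycle $C$ sits inside $G$: a short or badly placed $C$ will leave cyclic $5$-cuts in $\tilde G$ coming from the part $M=G-C$. Nothing in your sketch addresses why an arbitrary cycle in an arbitrary strictly critical snark from Theorem~\ref{thm:conn4} would work. The paper sidesteps this entirely: the four specific cycles are exhibited explicitly, cyclic $6$-connectivity of the four resulting minimal snarks $G_{320},G_{306},G_{324},G_{342}$ is checked by computer, and the passage to larger orders is handled by observing that replacing $Y_2$ by $Y_{2m}$ inside a superedge preserves cyclic $6$-connectivity just as it does for Isaacs snarks. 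If you want a computer-free proof you will need to identify concrete cycles and carry out a genuine case analysis of candidate small cuts relative to those cycles; the generic argument you outline is not enough.
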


\begin{proof}
First, we construct the smallest snark for each of the 
residue classes$\pmod8$
by taking a suitable strictly critical snark $G$ and replacing a suitable cycle in $G$ with a supercycle. By Corollary \ref{lemma:strictlycritical}, the resulting snark is also strictly critical. The four base snarks used in the constructions for each even residue class$\pmod8$ are depicted in Figure~\ref{fig:sc6} together with the cycle that we replace. In each case, all the superedges used for the corresponding supercycle are isomorphic to $A_5$, which has $18$ vertices.

To prove Item~(i), we start with one of the smallest strictly critical snarks $G_{32}$ (described also in \cite{Chladny-Skoviera-Factorisations}) which is a dot product of an I-extension of one of the two Goldblerg-Loupekine snark of order $22$ and the Petersen graph (see Figure~\ref{fig:sc6-320}). We choose the $16$-cycle indicated in Figure~\ref{fig:sc6-320} and after replacing it with the $16$-supercycle containing superedges isomorphic to $A_5$ we obtain a strictly critical snark $G_{320}$ of order $320$. The snark $G_{32}$ contains one pair $\{x, y\}$ of non-adjacent removable vertices and these two vertices remain removable in the snark $G_{320}$ according to Lemma~\ref{lemma:removable}.

For the remaining three items, we take the snark $G_{36}$ depicted in Figures \ref{fig:sc6-306}, \ref{fig:sc6-324}, and \ref{fig:sc6-342} which is of the form $H_6 * \TTTsc(T_P, T_P, T_P)$, where $T_P$ denotes the proper $(2,3)$-pole constructed from the Petersen graph (see Section \ref{sec:cc5} for the definitions of $H_6$ and $\TTTsc$). If we replace the $15$-cycle indicated in Figure~\ref{fig:sc6-306}, we obtain a strictly critical snark $G_{306}$ of order $306$. Replacing the $16$-cycle indicated in Figure~\ref{fig:sc6-324} yields $G_{324}$ (of order $324$) and replacing the $17$-cycle in Figure~\ref{fig:sc6-342} yields $G_{342}$ (order $342$).
    
Let $\tilde v$ denote the only vertex contained in the supervertex that replaces a vertex $v$. In $G_{36}$, every $2$-element subset of $\{x, y, z\}$ (see Figure~\ref{fig:sc6}) is a pair of non-adjacent removable vertices, thus according to Lemma~\ref{lemma:removable}, also every pair of vertices from the set $\{ \tilde x, \tilde y, \tilde z \}$ is removable in $G_{306}$, $G_{324}$, and $G_{342}$, respectively.

    \begin{figure}[h]
    	\begin{subfigure}{0.45\textwidth}
	    	\centering
	    	\includegraphics[]{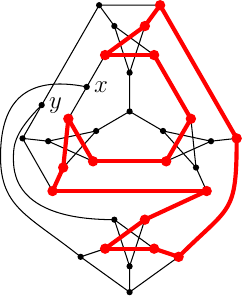}
			\caption{}
			\label{fig:sc6-320}
    	\end{subfigure}
    	\begin{subfigure}{0.45\textwidth}
	    	\centering
	    	\includegraphics[]{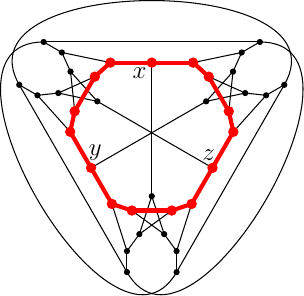}
			\caption{}
			\label{fig:sc6-306}
    	\end{subfigure}
    	
    	\medskip
    	\begin{subfigure}{0.45\textwidth}
    	    		\centering
    	    		\includegraphics[]{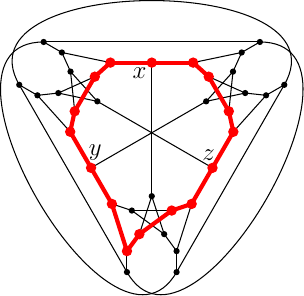}
    	    		\caption{}
    	    		\label{fig:sc6-324}
    	    	\end{subfigure}
    	    	\begin{subfigure}{0.45\textwidth}
    	    		\centering
    	    		\includegraphics[]{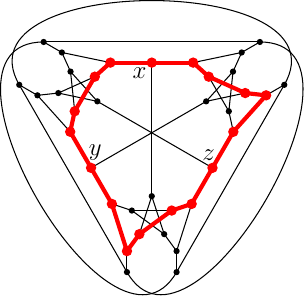}
    	    		\caption{}
    	    		\label{fig:sc6-342}
    	    	\end{subfigure}
    	\caption{The snarks $G_{32}$ and $G_{36}$ with cycles to replace in superposition}
    	\label{fig:sc6}
    \end{figure}

For each $n \in \{320, 306, 324, 342\}$, the snark $G_n$ is strictly critical, has order $n$ and, as we have verified by using a computer program, it is cyclically $6$-connected. If we replace one of the superedges $A_5$ contained in $G_n$ with the superedge $A_{5 + 2k}$ for an integer $k \ge 1$, we obtain a snark $G_{n + 8k}$ of order $n + 8k$. The snark $G_{n + 8k}$ is strictly critical in view of Corollary \ref{lemma:strictlycritical}. It is cyclically $6$-connected: the larger constructed snarks arise from the smallest ones by replacing $Y_2$ with $Y_{2m}$ for some $m$, and such a substitution maintains cyclic connectivity 6 in the same way as it does for the Isaacs snarks. Hence, the snarks $G_n$ constructed above possess all the required properties, and the proof is complete.
\end{proof}

To our best knowledge, the snark $G_{306}$ is the smallest known cyclically $6$-connected strictly critical snark. It is hard to believe that there are no smaller such snarks, but discovering them would very likely require a new method of constructing cyclically $6$-connected snarks, since superposition tends to produce large graphs. This suggests the following problem.

\begin{problem}\label{problem1}
Does there exist a cyclically $6$-connected strictly critical snark of order smaller than $306$?
\end{problem}

We believe that a statement about bicriticality analogous to Theorem~\ref{thm:superposition} might also be true, but the proof seems too complicated if we use our current method.

\begin{problem}
Is it true that for every bicritical snark $G$ any $k$-cycle $C$ in $G$, the snark $\superp(G, C, SC_k)$ is always bicritical?
\end{problem}

If the answer was yes (perhaps with some additional assumptions about cycles being replaced), it would provide a useful tool for constructions of cyclically $6$-connected bicritical snarks.

\subsection*{Acknowledgements}

The authors acknowledge partial support from the research grants APVV-19-0308, VEGA 1/0743/21 and VEGA 1/0727/22.

\end{document}